\def\0{{\bf 0}}
\def\1{{\bf 1}}
\def\ba{{\bm \alpha}}
\def\bb{{\bm \beta}}
\def\bg{{\bm \gamma}}
\def\bo{{\bm \omega}}
\def\A{{\bf A}}
\def\B{{\bf B}}
\def\I{{\bf I}}
\def\G{{\bf G}}
\def\M{{\bf M}}
\def\U{{\bf U}}
\def\X{{\bf X}}
\def\bl{{\bf \Lambda}}
\def\bs{{\bf \Sigma}}
\def\R{{\bf R}}
\def\RR{\mathcal{R}}
\def\RRR{\mathbb{R}}
\DeclareMathOperator{\rank}{rank}
\DeclareMathOperator{\mult}{Mult}
\DeclareMathOperator{\tr}{Tr}
\newtheorem*{remark}{Remark}
\newtheorem*{example}{Example}
\newtheorem{definition}{Definition}
\newtheorem{proposition}{Proposition}
\newcommand*{\Scale}[2][4]{\scalebox{#1}{$#2$}}%
\newcommand{\subf}[2]{%
   {\small\begin{tabular}[t]{@{}c@{}}
   #1\\#2
   \end{tabular}}%
}
\numberwithin{equation}{section}
\title{Motifs, Coherent Configurations and Second Order Network Generation }
\author{Jared C. Bronski}  
\author{Timothy Ferguson
}
\begin{document}


\begin{abstract}
In this paper we illuminate some algebraic-combinatorial structure
underlying the second order networks (SONETS)
random graph model of Nykamp, Zhao and collaborators\cite{Nykamp.Zhao.2011,Zhao.2012,Fuller.2016}. In particular we show that this algorithm is deeply connected with a certain homogeneous coherent configuration, a non-commuting generalization of the classical Johnson scheme. This algebraic structure underlies certain surprising
identities (that do not appear to have been previously observed) satisfied by the covariance matrices in the Nykamp-Zhao
scheme. We show that an understanding of this algebraic structure
leads to simplified numerical methods for carrying out the linear
algebra required to implement the SONETS algorithm. We also show that
this structure extends naturally to the problem of generating random
subgraphs of graphs other than the complete directed graph. 
\end{abstract}


\maketitle


\section{Introduction}
The Erd\H{o}s-R\'{e}nyi $G(n,p)$ model is a common model of a
random graph of $n$ nodes, where each possible edge is independently
present (resp. absent) with
probability $p$ (resp. $1-p$). However it has long been understood that many, if not
most, real world networks show substantial departures from
independence.\cite{Albert.Barabasi.2002,Milo.2002,Barabasi.2009} One indicator of the departure from independence is
given by the relative frequency of certain {\em motifs}, or subgraphs.
In real world networks it is frequently the case that the number of
certain motifs occuring in the graph differs substantially from what one would
expect based on the assumption of independent edges, indicating correlations among the edges. This has led to
a large effort to understand this phenomenon. While it is difficult to
do justice to the vast amount of literature on this subject most of
the work has proceeded in two complementary directions: developing
tools for identifying important motifs or other structures in a given (large) network\cite{Porter.Mucha.2009,Porter.Mucha.2013,Kashtan.2004,Wang.Baur.2012}, and algorithms for
generating random networks with certain desirable properties including,
but not limited to, having specified  probabilities for different
motifs. The latter random graph models include the well-known preferential attachment
model\cite{Barabasi.Albert.1999,Newman.PA.2001} and small world network models\cite{Watts.Strogatz.1998,Newman.Watts.1999}  as well as a
number of models and methods of generation\cite{Newman.2003, delGenio2010,Batagelj.2005,Diaconis.2011,Aiello.2000,Chung.2004}.  
   
The motivation for this work is the work of Nykamp, Zhao and
collaborators \cite{Nykamp.Zhao.2011,Zhao.2012,Fuller.2016} on
generating what they refer to as second order networks, or
SONETS. This  generalization of the Erd\H{o}s-R\'{e}nyi random
graph model that allows for the generation of random networks
(directed graphs) with second order correlations among the  edges:
one can not only specify the single edge probabilities, but one can
also independently vary the probabilities of all possible two edge
motifs. We show that the correlation matrices of the
SONETS model enjoy some remarkable algebraic identities. We further
show that these identities can be explained by the fact that there is
a coherent configuration, a particular type of algebraic structure,
that underlies these models. We also construct some other examples of
second order networks, and the related coherent configurations. 
In each case the existence of a coherent configuration has strong
numerical implications for the implementation of the method. In
particular all of the spectral operation that need to be done on the
covariance matrix -- mainly the extraction of the positive definite
square root -- can be done in time {\em independent of the size of the matrix}.

We begin with a brief description of the Nykamp-Zhao scheme for
generating second order networks (simple directed graphs). The Nykamp-Zhao approach
generates a random directed graph in the following manner. One first
generates a Gaussian random vector  $\bo \in \RRR^E$, where $E=N(N-1)$
is the number of potential edges. One then performs ``thresholding'':
an edge is present at site $i$ if $\omega_i > x$ and absent if
$\omega_i \leq x$ for some choice of the threshold level $x$. This would simply give an Erd\H{o}s-R\'{e}nyi
random graph if the $\omega_i$ were independent and identically
distributed (iid) -- in other words if the covariance were a
multiple of the identity matrix -- but the covariance matrix $\bs$ is instead chosen as follows. The authors first identify various two edge motifs:
\begin{itemize}
\item Reciprocal: The edges point between the same pair of vertices, in opposite directions.
\item Convergent: Both edges point inwards to a common vertex.
\item Divergent: Both edges point outward from a common vertex.
\item Chain: One edge points inward to a vertex, the other points outward from the same vertex.
\end{itemize}
We add to this collection the  disjoint motif, where the edges do not
share a common vertex. This motif was not considered in the original
SONETS work but fits into this framework naturally, and must be
included here for reasons of algebraic completeness. These motifs are illustrated in Figure \ref{fig:DJohnson}. The covariance for each pair of edges is then assigned according to the motif which they form. For instance, $\Sigma_{ij}=\alpha_{\text{div}}$ for every pair of edges $(i,j)$ that form a divergent motif. This gives an $N(N-1)\times N(N-1)$ covariance matrix $\bs(\ba)$ depending on five arbitrary constants $\ba = (\alpha_{\text{recip}},\alpha_{\text{conv}},\alpha_{\text{div}},\alpha_{\text{chain}},\alpha_{\text{disj}})$. For instance, when $N=5$ the covariance $\bs(\ba)$ is a $20\times 20$ matrix given by 
\begin{align*}
\Scale[.65]{\left(
\begin{array}{cccccccccccccccccccc}
 1 & \alpha_{\text{recip}} & \alpha_{\text{conv}} & \alpha_{\text{chain}} & \alpha_{\text{conv}} &
   \alpha_{\text{chain}} & \alpha_{\text{conv}} & \alpha_{\text{chain}} & \alpha_{\text{chain}} &
   \alpha_{\text{div}} & \alpha_{\text{chain}} & \alpha_{\text{div}} & \alpha_{\text{chain}} & \alpha_{\text{div}}
   & \alpha_{\text{disj}} & \alpha_{\text{disj}} & \alpha_{\text{disj}} & \alpha_{\text{disj}} &
   \alpha_{\text{disj}} & \alpha_{\text{disj}} \\
 \alpha_{\text{recip}} & 1 & \alpha_{\text{chain}} & \alpha_{\text{div}} & \alpha_{\text{chain}} &
   \alpha_{\text{div}} & \alpha_{\text{chain}} & \alpha_{\text{div}} & \alpha_{\text{conv}} &
   \alpha_{\text{chain}} & \alpha_{\text{conv}} & \alpha_{\text{chain}} & \alpha_{\text{conv}} &
   \alpha_{\text{chain}} & \alpha_{\text{disj}} & \alpha_{\text{disj}} & \alpha_{\text{disj}} &
   \alpha_{\text{disj}} & \alpha_{\text{disj}} & \alpha_{\text{disj}} \\
 \alpha_{\text{conv}} & \alpha_{\text{chain}} & 1 & \alpha_{\text{recip}} & \alpha_{\text{conv}} &
   \alpha_{\text{chain}} & \alpha_{\text{conv}} & \alpha_{\text{chain}} & \alpha_{\text{div}} &
   \alpha_{\text{chain}} & \alpha_{\text{disj}} & \alpha_{\text{disj}} & \alpha_{\text{disj}} &
   \alpha_{\text{disj}} & \alpha_{\text{chain}} & \alpha_{\text{div}} & \alpha_{\text{chain}} &
   \alpha_{\text{div}} & \alpha_{\text{disj}} & \alpha_{\text{disj}} \\
 \alpha_{\text{chain}} & \alpha_{\text{div}} & \alpha_{\text{recip}} & 1 & \alpha_{\text{chain}} &
   \alpha_{\text{div}} & \alpha_{\text{chain}} & \alpha_{\text{div}} & \alpha_{\text{chain}} &
   \alpha_{\text{conv}} & \alpha_{\text{disj}} & \alpha_{\text{disj}} & \alpha_{\text{disj}} &
   \alpha_{\text{disj}} & \alpha_{\text{conv}} & \alpha_{\text{chain}} & \alpha_{\text{conv}} &
   \alpha_{\text{chain}} & \alpha_{\text{disj}} & \alpha_{\text{disj}} \\
 \alpha_{\text{conv}} & \alpha_{\text{chain}} & \alpha_{\text{conv}} & \alpha_{\text{chain}} & 1 &
   \alpha_{\text{recip}} & \alpha_{\text{conv}} & \alpha_{\text{chain}} & \alpha_{\text{disj}} &
   \alpha_{\text{disj}} & \alpha_{\text{div}} & \alpha_{\text{chain}} & \alpha_{\text{disj}} &
   \alpha_{\text{disj}} & \alpha_{\text{div}} & \alpha_{\text{chain}} & \alpha_{\text{disj}} &
   \alpha_{\text{disj}} & \alpha_{\text{chain}} & \alpha_{\text{div}} \\
 \alpha_{\text{chain}} & \alpha_{\text{div}} & \alpha_{\text{chain}} & \alpha_{\text{div}} & \alpha_{\text{recip}}
   & 1 & \alpha_{\text{chain}} & \alpha_{\text{div}} & \alpha_{\text{disj}} & \alpha_{\text{disj}} &
   \alpha_{\text{chain}} & \alpha_{\text{conv}} & \alpha_{\text{disj}} & \alpha_{\text{disj}} &
   \alpha_{\text{chain}} & \alpha_{\text{conv}} & \alpha_{\text{disj}} & \alpha_{\text{disj}} &
   \alpha_{\text{conv}} & \alpha_{\text{chain}} \\
 \alpha_{\text{conv}} & \alpha_{\text{chain}} & \alpha_{\text{conv}} & \alpha_{\text{chain}} &
   \alpha_{\text{conv}} & \alpha_{\text{chain}} & 1 & \alpha_{\text{recip}} & \alpha_{\text{disj}} &
   \alpha_{\text{disj}} & \alpha_{\text{disj}} & \alpha_{\text{disj}} & \alpha_{\text{div}} &
   \alpha_{\text{chain}} & \alpha_{\text{disj}} & \alpha_{\text{disj}} & \alpha_{\text{div}} &
   \alpha_{\text{chain}} & \alpha_{\text{div}} & \alpha_{\text{chain}} \\
 \alpha_{\text{chain}} & \alpha_{\text{div}} & \alpha_{\text{chain}} & \alpha_{\text{div}} & \alpha_{\text{chain}}
   & \alpha_{\text{div}} & \alpha_{\text{recip}} & 1 & \alpha_{\text{disj}} & \alpha_{\text{disj}} &
   \alpha_{\text{disj}} & \alpha_{\text{disj}} & \alpha_{\text{chain}} & \alpha_{\text{conv}} &
   \alpha_{\text{disj}} & \alpha_{\text{disj}} & \alpha_{\text{chain}} & \alpha_{\text{conv}} &
   \alpha_{\text{chain}} & \alpha_{\text{conv}} \\
 \alpha_{\text{chain}} & \alpha_{\text{conv}} & \alpha_{\text{div}} & \alpha_{\text{chain}} & \alpha_{\text{disj}}
   & \alpha_{\text{disj}} & \alpha_{\text{disj}} & \alpha_{\text{disj}} & 1 & \alpha_{\text{recip}} &
   \alpha_{\text{conv}} & \alpha_{\text{chain}} & \alpha_{\text{conv}} & \alpha_{\text{chain}} &
   \alpha_{\text{chain}} & \alpha_{\text{div}} & \alpha_{\text{chain}} & \alpha_{\text{div}} &
   \alpha_{\text{disj}} & \alpha_{\text{disj}} \\
 \alpha_{\text{div}} & \alpha_{\text{chain}} & \alpha_{\text{chain}} & \alpha_{\text{conv}} & \alpha_{\text{disj}}
   & \alpha_{\text{disj}} & \alpha_{\text{disj}} & \alpha_{\text{disj}} & \alpha_{\text{recip}} & 1 &
   \alpha_{\text{chain}} & \alpha_{\text{div}} & \alpha_{\text{chain}} & \alpha_{\text{div}} &
   \alpha_{\text{conv}} & \alpha_{\text{chain}} & \alpha_{\text{conv}} & \alpha_{\text{chain}} &
   \alpha_{\text{disj}} & \alpha_{\text{disj}} \\
 \alpha_{\text{chain}} & \alpha_{\text{conv}} & \alpha_{\text{disj}} & \alpha_{\text{disj}} & \alpha_{\text{div}}
   & \alpha_{\text{chain}} & \alpha_{\text{disj}} & \alpha_{\text{disj}} & \alpha_{\text{conv}} &
   \alpha_{\text{chain}} & 1 & \alpha_{\text{recip}} & \alpha_{\text{conv}} & \alpha_{\text{chain}} &
   \alpha_{\text{div}} & \alpha_{\text{chain}} & \alpha_{\text{disj}} & \alpha_{\text{disj}} &
   \alpha_{\text{chain}} & \alpha_{\text{div}} \\
 \alpha_{\text{div}} & \alpha_{\text{chain}} & \alpha_{\text{disj}} & \alpha_{\text{disj}} & \alpha_{\text{chain}}
   & \alpha_{\text{conv}} & \alpha_{\text{disj}} & \alpha_{\text{disj}} & \alpha_{\text{chain}} &
   \alpha_{\text{div}} & \alpha_{\text{recip}} & 1 & \alpha_{\text{chain}} & \alpha_{\text{div}} &
   \alpha_{\text{chain}} & \alpha_{\text{conv}} & \alpha_{\text{disj}} & \alpha_{\text{disj}} &
   \alpha_{\text{conv}} & \alpha_{\text{chain}} \\
 \alpha_{\text{chain}} & \alpha_{\text{conv}} & \alpha_{\text{disj}} & \alpha_{\text{disj}} & \alpha_{\text{disj}}
   & \alpha_{\text{disj}} & \alpha_{\text{div}} & \alpha_{\text{chain}} & \alpha_{\text{conv}} &
   \alpha_{\text{chain}} & \alpha_{\text{conv}} & \alpha_{\text{chain}} & 1 & \alpha_{\text{recip}} &
   \alpha_{\text{disj}} & \alpha_{\text{disj}} & \alpha_{\text{div}} & \alpha_{\text{chain}} & \alpha_{\text{div}}
   & \alpha_{\text{chain}} \\
 \alpha_{\text{div}} & \alpha_{\text{chain}} & \alpha_{\text{disj}} & \alpha_{\text{disj}} & \alpha_{\text{disj}}
   & \alpha_{\text{disj}} & \alpha_{\text{chain}} & \alpha_{\text{conv}} & \alpha_{\text{chain}} &
   \alpha_{\text{div}} & \alpha_{\text{chain}} & \alpha_{\text{div}} & \alpha_{\text{recip}} & 1 &
   \alpha_{\text{disj}} & \alpha_{\text{disj}} & \alpha_{\text{chain}} & \alpha_{\text{conv}} &
   \alpha_{\text{chain}} & \alpha_{\text{conv}} \\
 \alpha_{\text{disj}} & \alpha_{\text{disj}} & \alpha_{\text{chain}} & \alpha_{\text{conv}} & \alpha_{\text{div}}
   & \alpha_{\text{chain}} & \alpha_{\text{disj}} & \alpha_{\text{disj}} & \alpha_{\text{chain}} &
   \alpha_{\text{conv}} & \alpha_{\text{div}} & \alpha_{\text{chain}} & \alpha_{\text{disj}} &
   \alpha_{\text{disj}} & 1 & \alpha_{\text{recip}} & \alpha_{\text{conv}} & \alpha_{\text{chain}} &
   \alpha_{\text{chain}} & \alpha_{\text{div}} \\
 \alpha_{\text{disj}} & \alpha_{\text{disj}} & \alpha_{\text{div}} & \alpha_{\text{chain}} & \alpha_{\text{chain}}
   & \alpha_{\text{conv}} & \alpha_{\text{disj}} & \alpha_{\text{disj}} & \alpha_{\text{div}} &
   \alpha_{\text{chain}} & \alpha_{\text{chain}} & \alpha_{\text{conv}} & \alpha_{\text{disj}} &
   \alpha_{\text{disj}} & \alpha_{\text{recip}} & 1 & \alpha_{\text{chain}} & \alpha_{\text{div}} &
   \alpha_{\text{conv}} & \alpha_{\text{chain}} \\
 \alpha_{\text{disj}} & \alpha_{\text{disj}} & \alpha_{\text{chain}} & \alpha_{\text{conv}} & \alpha_{\text{disj}}
   & \alpha_{\text{disj}} & \alpha_{\text{div}} & \alpha_{\text{chain}} & \alpha_{\text{chain}} &
   \alpha_{\text{conv}} & \alpha_{\text{disj}} & \alpha_{\text{disj}} & \alpha_{\text{div}} &
   \alpha_{\text{chain}} & \alpha_{\text{conv}} & \alpha_{\text{chain}} & 1 & \alpha_{\text{recip}} &
   \alpha_{\text{div}} & \alpha_{\text{chain}} \\
 \alpha_{\text{disj}} & \alpha_{\text{disj}} & \alpha_{\text{div}} & \alpha_{\text{chain}} & \alpha_{\text{disj}}
   & \alpha_{\text{disj}} & \alpha_{\text{chain}} & \alpha_{\text{conv}} & \alpha_{\text{div}} &
   \alpha_{\text{chain}} & \alpha_{\text{disj}} & \alpha_{\text{disj}} & \alpha_{\text{chain}} &
   \alpha_{\text{conv}} & \alpha_{\text{chain}} & \alpha_{\text{div}} & \alpha_{\text{recip}} & 1 &
   \alpha_{\text{chain}} & \alpha_{\text{conv}} \\
 \alpha_{\text{disj}} & \alpha_{\text{disj}} & \alpha_{\text{disj}} & \alpha_{\text{disj}} & \alpha_{\text{chain}}
   & \alpha_{\text{conv}} & \alpha_{\text{div}} & \alpha_{\text{chain}} & \alpha_{\text{disj}} &
   \alpha_{\text{disj}} & \alpha_{\text{chain}} & \alpha_{\text{conv}} & \alpha_{\text{div}} &
   \alpha_{\text{chain}} & \alpha_{\text{chain}} & \alpha_{\text{conv}} & \alpha_{\text{div}} &
   \alpha_{\text{chain}} & 1 & \alpha_{\text{recip}} \\
 \alpha_{\text{disj}} & \alpha_{\text{disj}} & \alpha_{\text{disj}} & \alpha_{\text{disj}} & \alpha_{\text{div}} &
   \alpha_{\text{chain}} & \alpha_{\text{chain}} & \alpha_{\text{conv}} & \alpha_{\text{disj}} &
   \alpha_{\text{disj}} & \alpha_{\text{div}} & \alpha_{\text{chain}} & \alpha_{\text{chain}} &
   \alpha_{\text{conv}} & \alpha_{\text{div}} & \alpha_{\text{chain}} & \alpha_{\text{chain}} &
   \alpha_{\text{conv}} & \alpha_{\text{recip}} & 1 \\
\end{array}
\right)}
\end{align*}
The problem of generating a Gaussian random vector with a given covariance matrix is an exercise in spectral theory. In particular one needs to compute the positive semi-definite square root of the matrix, $\bs^\frac12$, and act on a Gaussian iid random vector $\bo \in \RRR^E$ with this matrix. We note the following remarkable algebraic facts about  the spectrum of the covariance $\bs(\ba)$, which do not appear to have been noted previously:
\begin{itemize}
\item For fixed values of $\ba$ the covariance matrix $\bs(\ba)$ has
  at most five distinct eigenvalues, independent of the size of the matrix, so the multiplicities of the eigenvalues are typically very large.
\item The eigenvalues of the covariance matrix can be computed explicitly as a function of $\ba$ for all $N$. 
\item The dependence of the eigenvalues on the components of $\ba$ is simple: the eigenvalues are either linear functions in the components of $\ba$, or are algebraic of degree two (they are the roots of a quadratic). 
\end{itemize}
In particular for any number of vertices $N$ there are only five distinct eigenvalues of $\bs(\ba)$, which are given by 
\begin{align*}
&\lambda_1 = 1 +\alpha_{\text{recip}} +(N-2) \alpha_{\text{conv}}+ 2 (N-2) \alpha_{\text{chain}}+(N-2) \alpha_{\text{div}}  +(N-2)(N-3)
   \alpha_{\text{disjoint}}\\
&\lambda_2 = 1 -\alpha_{\text{recip}} -\alpha_{\text{conv}} +2 \alpha_{\text{chain}}  -\alpha_{\text{div}}  
  \\
&\lambda_3 = 1+\alpha_{\text{recip}}-\alpha_{\text{conv}} -2 \alpha_{\text{chain}} 
  -\alpha_{\text{div}}+2  \alpha_{\text{disjoint}}  \\
&\lambda_{4/5}  = 1+\frac{(N-3)}{2} \alpha_{\text{conv}} - \alpha_{\text{chain}} +\frac{(N-3)}{2} \alpha_{\text{div}} - (N-3)
   \alpha_{\text{disjoint}}\pm \frac{\tau}{2} 
\end{align*}
where 
\begin{align*}
\tau^2 = N(N-2) (\alpha_{\text{conv}}-\alpha_{\text{div}})^2 + (\alpha_{\text{conv}} + \alpha_{\text{div}} - 2 (N-3)
(\alpha_{\text{chain}}-\alpha_{\text{disj}}) - 2 \alpha_{\text{recip}})^2  
\end{align*}
In order to be admissible as a covariance matrix $\bs(\ba)$ must be
positive definite. One consequence of the above formulae is that  one
can compute, reasonably explicitly, the region of parameter space in
which $\bs(\ba)$ is positive definite: it is given by the intersection of
three half-planes ($\lambda_{1,2,3}>0$) and the region cut out by two
hyperboloids ($\lambda_{4,5}>0$). We show that the algebra satisfied
by the covariance matrices is isomorphic to an algebra of matrices of
fixed and rather modest size: $7\times 7$ in the case of the
Nykamp-Zhao algorithm. Any linear algebraic operation that one might
want to perform on an $N(N-1) \times N(N-1)$ covariance matrix can
instead be done on a representative from the algebra of $7\times7$
matrices, and the results can be translated back to the $N(N-1) \times N(N-1)$ covariance matrix. 
This includes computation of eigenvalues, inversion and extraction of
the square root, the main operation
required to  apply the SONETS algorithm. To reiterate:  the existence of a map to a
seven dimensional algebra implies that all of these linear algebraic
operations can be done in time {\em independent of the size of the matrix}.  

The basic algebraic explanation for this collection of facts is that there is, underlying the SONETS method, an algebraic object known as an association scheme or a coherent configuration\cite{higman.1975,cameron2003coherent,Zieschang,Bannai.Ito}. Association schemes and coherent configurations arise in a number of applications, including coding theory\cite{Delsarte} and the design of experiments\cite{Bailey.Book}. These structures represent a generalization of the notion of a group encoding certain nice pairwise relations between elements of a set.  The SONETS method arises naturally from a homogeneous coherent configuration (HCC), a  non-commuting generalization of the classical Johnson $J(N,2)$ association scheme. The relations in the HCC roughly correspond to motifs in the network. Actually, as will be explained, the relations represent a slight refinement of the idea of a motif. One of the basic facts about association schemes and coherent configurations is that they give rise to a natural finite dimensional algebra, the Bose-Meisner algebra.  The algebra underlying the SONETS algorithm as described by Nykamp and Zhao has degree seven, implying that the number of distinct eigenvalues  of the covariance matrix is at most seven independent of the size of the network. For SONETS the number is actually less, and there are only five distinct eigenvalues. 

We refer the interested reader to any number of texts on coherent configurations and association schemes\cite{Bailey.Book,Zieschang,Bannai.Ito}, but in the interests of making this paper relatively self-contained we will include proofs of the results that we need.


\subsection{Association Schemes and Coherent Configurations}

Association schemes and coherent configurations are algebraic structures that arise in a number of areas including statistics, particularly the theory of experimental design, and coding theory. They are also used as a tool in abstract algebra for studying permutation groups, and can be thought of as representing a generalization of group theory and the associated representation theory. 

A coherent configuration can be described as follows.  
\begin{definition} \label{def:configuration}
Given an index set $\X$ and the product set $\X \times \X$ of ordered pairs of indices, a  coherent configuration is a set of $d+1$ relations $\RR^{(0)}, \RR^{(1)}, \ldots,\RR^{(d)}$ given by subsets of $\X \times \X$ satisfying the following properties:
\begin{enumerate}
\item For every $x$ and $y$ in $\X$ there is a unique relation $\RR^{(k)}$ such that the ordered pair $(x,y) \in \RR^{(k)}.$ In other words the relations partition $\X \times \X$. 
\item There is a subset of relations $\RR^{(0)}, \RR^{(1)}, \ldots, \RR^{(j)}$ that partition the diagonal set \\$\{ (x,x) ~~\rvert ~~ x \in \X \}$.
\item If ${\mathcal R}^{(k)}$ is a relation in the set the adjoint relation defined by \\$(\RR^{(k)})^\dagger := \{ (y,x)~~ \rvert ~~(x,y) \in \RR^{(k)} \}$ is also a relation in the set.  
\item Given any pair $(x,y) \in \RR^{(i)}$ the number of elements $z$ such that $(x,z) \in \RR^{(k)}$ and $(z,y) \in \RR^{(j)}$ depends only on $i,j,k$ and not on the individual $(x,y)$.  
\end{enumerate}
A homogeneous coherent configuration (HCC) is a coherent configuration for which property (2) above is replaced by the stronger condition that one of the relations be the identity relation.
\begin{itemize}
\item[(2')] $\RR^{(0)} =\{ (x,x) | x \in \X \}$. 
\end{itemize}
An association scheme is is a homogeneous coherent configuration for
which property (3) above is replaced by the stronger condition that the relations be symmetric.
\begin{itemize}
\item[(3')] $(\RR^{(k)})^\dagger = \RR^{(k)}$. 
\end{itemize}
\end{definition}
We should warn the reader that we are adhering to the terminology of Cameron\cite{cameron2003coherent} and the earlier work of Higman\cite{higman.1975} but that this is not followed by all authors. For instance in the work of Hanaki and Miyamoto\cite{Hanaki.I,Hanaki.II} what they call association schemes would be called homogeneous coherent configurations in the nomenclature above: there is a single identity relation but relations are not necessarily symmetric. 

For any coherent configuration there exists a set of non-negative integers
$\rho^{(k)}_{ij}$, commonly referred to as the structure constants or intersection
numbers. These integers play a fundamental role in the algebraic
theory of coherent configurations, and are defined as follows.

\begin{definition} \label{def:constants}
The integers $\rho^{(k)}_{ij}$ are defined as follows: given a representative pair $(x,y)\in \RR^{(i)}$ 
\begin{align*}
\rho^{(k)}_{ij} = \left\vert \{ z \in \X ~~\rvert ~~(x,z) \in \RR^{(k)} \text{ and } (z,y) \in \RR^{(j)} \} \right\vert.  
\end{align*}
In other words $\rho^{(k)}_{ij}$ counts the number of $z$ such that
$(x,z)$ belongs to relation $k$ and $(z,y)$ belongs to relation $j$. 
Note that this well defined by $(4)$ of Definition
\ref{def:configuration}: this only depends on the relation to which
$(x,y)$ belongs, and not on the individual $x$ and $y$. 
\end{definition}

For each relation we define a corresponding adjacency matrix, as follows: $\R^{(k)} = \chi_{\RR^{(k)}}$, where $\chi$ is the usual characteristic function. In other words,
\begin{align*}
\R_{ij}^{(k)} =
\begin{cases}
1 & \mbox{if $(i,j) \in \RR^{(k)}$,}
\\
0 & \mbox{if $(i,j) \notin \RR^{(k)}$.}
\end{cases}
\end{align*}
This defines a collection of $(d+1)$ $(0,1)$ matrices representing the relations. We wish to note three important facts about the matrices $\{ \R^{(i)}\}_{i=0}^d$, which all follow immediately from the facts above:
\begin{itemize}
\item The matrices $\{ \R^{(i)}\}_{i=0}^d$ are linearly independent and orthogonal under the usual matrix inner product $\langle \A, \B \rangle = \tr(\A^\top \B)$.
\item The matrices form a closed algebra, in the sense that $\R^{(k)} \R^{(j)} = \sum_i \rho^{(k)}_{ij} \R^{(i)} $. Here the product is the usual matrix product. 
\item The matrices satisfy $\sum_i \R^{(i)} = \1_{|\X| \times |\X|}$, where $\1_{|\X| \times |\X|}$ is the $|\X| \times |\X|$ matrix with all entries equal to $1$. 
\end{itemize}
The structure constants $\rho^{(k)}_{ij}$ play an important role in
the theory as they  can be used to define an isomorphism of algebras that explains the special properties of covariance matrices in the SONETS scheme. We will address this in the following section. 

Note that, if the relations are all symmetric, then clearly $\rho^{(k)}_{ij} = \rho^{(j)}_{ik}$ and thus the corresponding adjacency matrices commute. While this will be true in the simplest example that we consider, that of the classical Johnson $J(N,2)$ scheme, it will not be true for most of the examples that we consider. 

One classical example of a coherent configuration is distance on a distance regular graph: a pair
of edges $(x,y)$ belongs to relation $\RR^{(k)}$ if $x$ distance $k$
from $y$ in the graph.  In this scheme all relations are obviously
symmetric, as the distance is symmetric, so this is actually an
association scheme. Throughout this paper we frequently reference the
Johnson association scheme: this arises from the distance regular
Johnson $(N,2)$ graphs in exactly this way. 

A second example of a coherent configuration is given by any group. There is one relation for each group element $g$, and a pair of elements $e,f$ are in $\RR^{g}$ if $e = g f.$ For this scheme the adjacency matrices are permutation matrices -- there is one non-zero entry in each row and column -- and the adjoint relation is $\RR^{g^{-1}}$. This scheme is, of course, typically not symmetric. In fact it is not hard to see that any association scheme where the adjacency matrices are permutation matrices comes from a group in this way, so in this way groups are a special case of association schemes where the scheme has the largest possible number of relations, and the adjacency matrix for any relation is a permutation matrix. 

In the context of networks the relations roughly correspond to two edge motifs in the network. The correspondence is not quite exact, for reasons that we will see  shortly. In fact the idea of a relation is slightly  more precise than that of a motif, and each relation can be thought of as a motif together with a role for each edge in the motif.


\begin{example}[The Johnson Scheme]
Consider the complete undirected graph with $E = \frac{N(N-1)}{2}$ edges. There is a natural association scheme on the edge set with three relations:
\begin{itemize}
\item $\RR^{(0)} = \{ (x,y) ~~\vert ~~x = y\}$ -- the identity relation. The corresponding matrix is the identity matrix
\item $\RR^{(1)} = \{ (x,y) ~~\vert ~~x \text{ and } y \text{ share one vertex}\}$
\item $\RR^{(2)} = \{ (x,y) ~~\vert ~~x \text{ and } y \text{ share no vertices}\}$.
\end{itemize}
One way to think about this is as follows: edges in the complete
undirected graph can be indexed by unordered pairs $(i,j)$, where the
edge connects vertices $i$ and $j$. Two edges $x$ and $y$ are in
$\RR^{(k)}$ if the number of elements in the intersection satisfies
$|x \cap y| = 2-k.$ This is distance in the
line graph of the complete graph -- the Johnson graph -- a distance
regular graph.  In this case
one can imagine using the SONETS idea to generate random undirected
graphs with correlations among the edges. We will do this later in the
paper. There are three motifs here,
corresponding to the three relations: the identity motif (the edges
are the same), the adjacent motif (the edges share a vertex) and the
disjoint motif (the edges do not share a vertex). For $N=4$ the
covariance matrix would be given by  
\begin{align*}
\R^{(0)} + \alpha_1 \R^{(1)} + \alpha_2 \R^{(2)} = \left(
\begin{array}{cccccc}
 1 & {\alpha_1} & {\alpha_1} & {\alpha_1} & {\alpha_1} & {\alpha_2} \\
 {\alpha_1} & 1 & {\alpha_1} & {\alpha_1} & {\alpha_2} & {\alpha_1} \\
 {\alpha_1} & {\alpha_1} & 1 & {\alpha_2} & {\alpha_1} & {\alpha_1} \\
 {\alpha_1} & {\alpha_1} & {\alpha_2} & 1 & {\alpha_1} & {\alpha_1} \\
 {\alpha_1} & {\alpha_2} & {\alpha_1} & {\alpha_1} & 1 & {\alpha_1} \\
 {\alpha_2} & {\alpha_1} & {\alpha_1} & {\alpha_1} & {\alpha_1} & 1 \\
\end{array}
\right)
\end{align*}
where $\alpha_1$ and $\alpha_2$ are constants representing the correlations between adjacent edges and disjoint edges. It is easy to confirm that the covariance is a linear combination of three commuting matrices and therefore that the eigenvalues are linear in $\alpha_1$ and $\alpha_2$. In fact the eigenvalues are $\lambda = 1 + 4 \alpha_1+\alpha_2$, with multiplicity one, $\lambda = 1 - 2 \alpha_1+\alpha_2$, with multiplicity two, and $\lambda = 1 -\alpha_2$, with multiplicity three.
\end{example}

The next example is the most important one for the purposes of this
paper, and underlies the SONETS algorithm detailed by Nykamp and
Zhao.


\begin{example}[The Nykamp-Zhao Homogeneous Coherent Configuration]
This HCC is defined on a complete directed graph
with $E=N(N-1)$ edges, and has seven relations on ordered pairs of edges
$(x,y)$.  
\begin{itemize}
\item{\bf  Identity}: Edges $x$ and $y$ are the same edge.
\item {\bf Reciprocal}: Edges $x$ and $y$ point between the same vertices but in opposite directions. 
\item {\bf Convergence}: Edges $x$ and $y$ point inward towards the same vertex. 
\item {\bf Chain}: Edge $x$ points inward to a vertex, edge $y$ points outward from the same vertex.
\item {\bf Anti-Chain}: Edge $x$ points outward from a vertex, edge $y$ points inward to the same vertex.
\item {\bf Divergence}: Edges $x$ and $y$ point outward from the same vertex. 
\item {\bf Disjoint}: Edges $x$ and $y$ do not share any vertices. 
\end{itemize}
Note that the Nykamp and Zhao discuss only four motifs: Reciprocal,
Convergence, Chain, and Divergence. The identity motif is, of course,
implicitly present in their work but is not discussed. The chain motif
translates into two relations, Chain and Anti-Chain. Both are required
in order for the relations to form a coherent configuration although
one can of course choose the coefficients of the two to be equal,
recovering the motif. Nykamp and Zhao do not discuss the disjoint motif, though in principle it could be considered in the same framework.
\end{example}


\begin{remark}
Some things to note about this coherent configuration. There are two
relations (Identity and Reciprocal) where the edges share two
vertices. There are four relations (Convergence, Chain, Anti-chain and
Divergence) where the edges share a single vertex. There is a single
relation (Disjoint) where the edges do not share any vertices. These
generalize the three relations in the Johnson scheme: distance zero,
one and two respectively. Five of the relations (Identity, Reciprocal,
Convergence, Divergence and Disjoint) are symmetric, while the
remaining two (Chain and Anti-Chain) are adjoints of one another. The
symmetric relations exactly correspond to the associated motifs, while
non-symmetric motifs generate a pair of relations. For instance the first row of $\R^{\text{Chain}}$ gives the edges that are outgoing from the vertex that edge one is incoming to. The first row of $\R^{\text{Anti-Chain}}$, however, gives the edges that are incoming to the vertex that edge one is outgoing from. Of course the union $\RR^{\text{Chain}} \cup \RR^{\text{Anti-Chain}}$ is symmetric and corresponds to the motif exactly. Six of the seven relations are illustrated in Figure (\ref{fig:DJohnson}).  
\end{remark}

\begin{figure}[h]
\centering
\begin{tabular}{|c|c|c|}
\hline
\subf{\includegraphics[width=40mm]{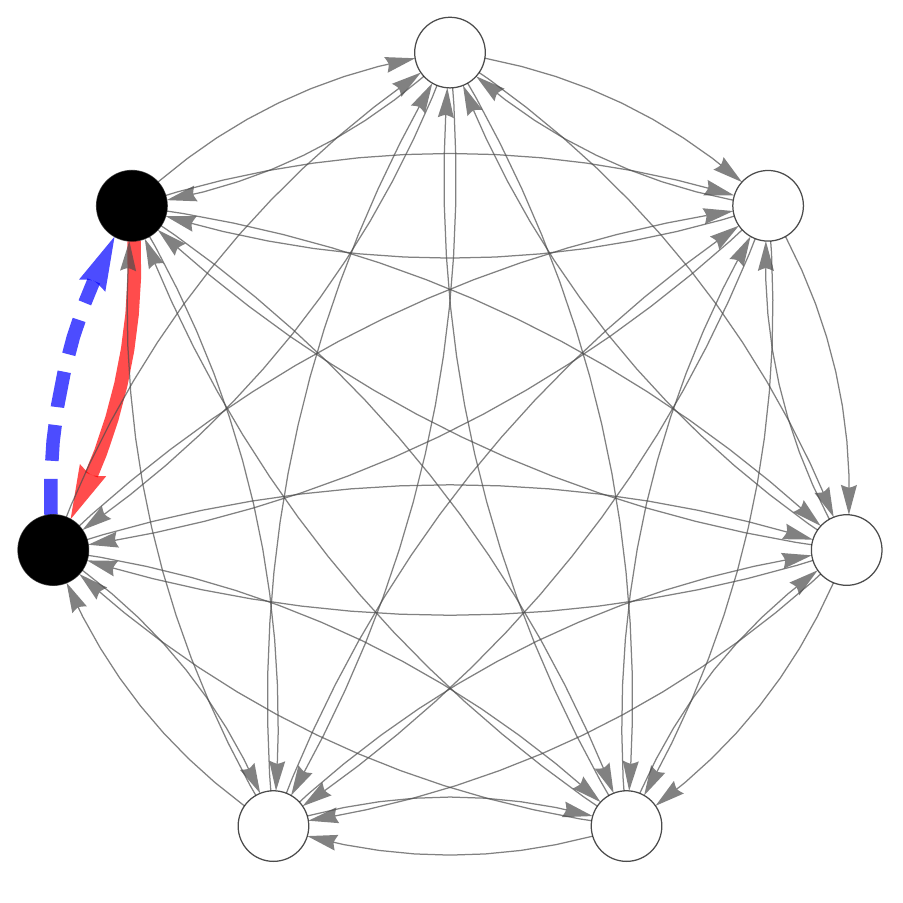}}{Reciprocal Motif}
&
\subf{\includegraphics[width=40mm]{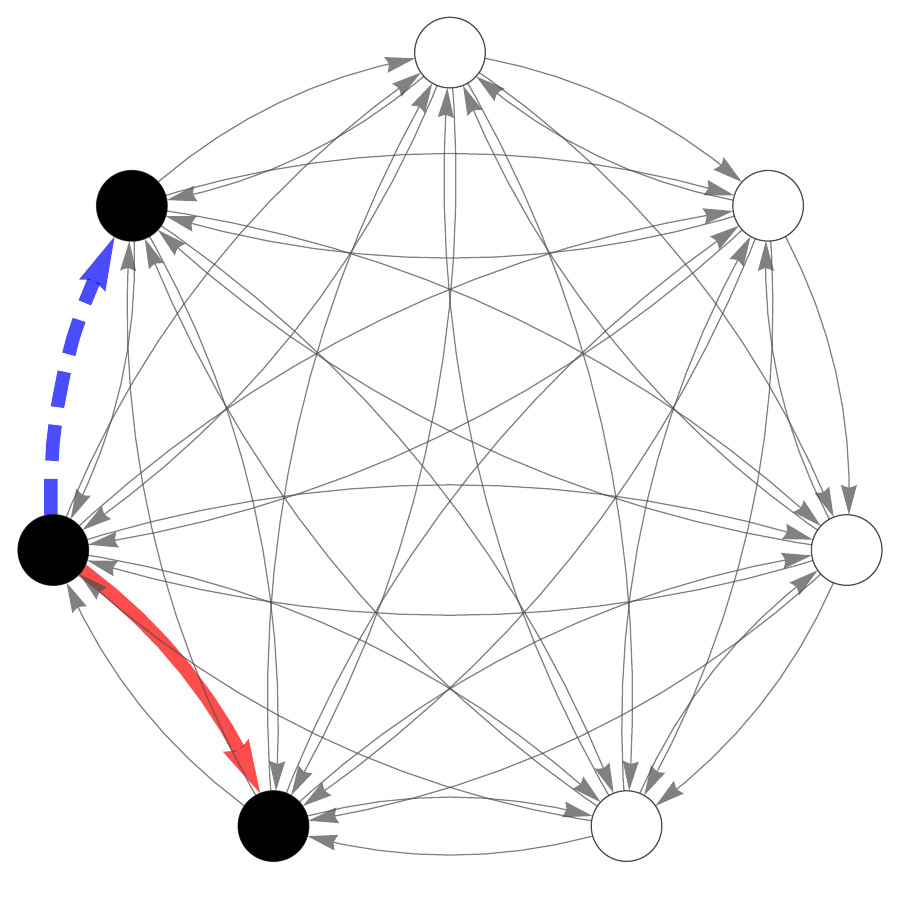}}{Divergent Motif} 
&
\subf{\includegraphics[width=40mm]{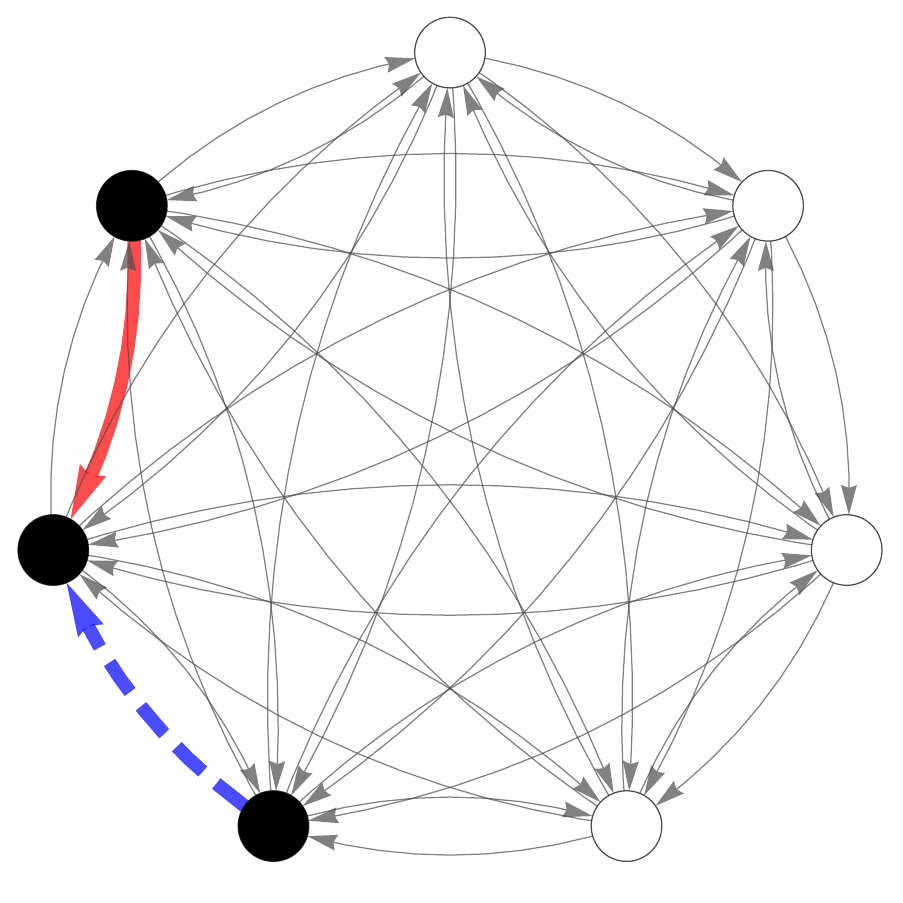}}{Convergent Motif}
\\
\hline
\subf{\includegraphics[width=40mm]{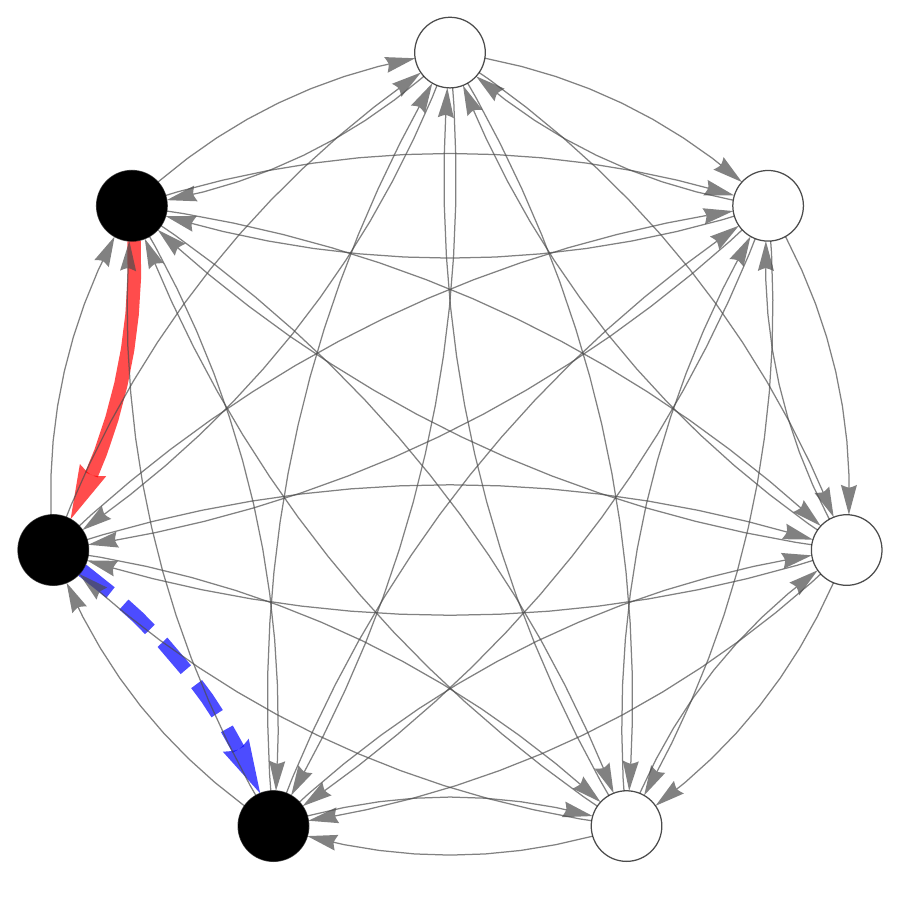}}{Chain Motif}
&
\subf{\includegraphics[width=40mm]{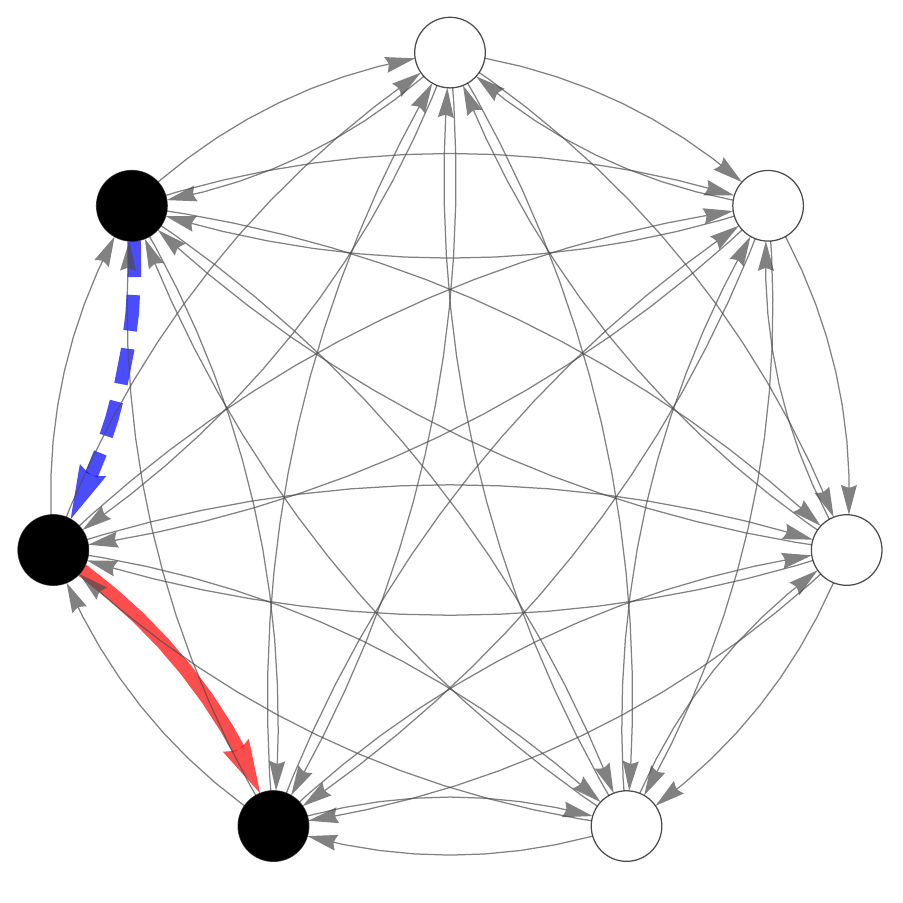}}{Anti-Chain Motif}
&
\subf{\includegraphics[width=40mm]{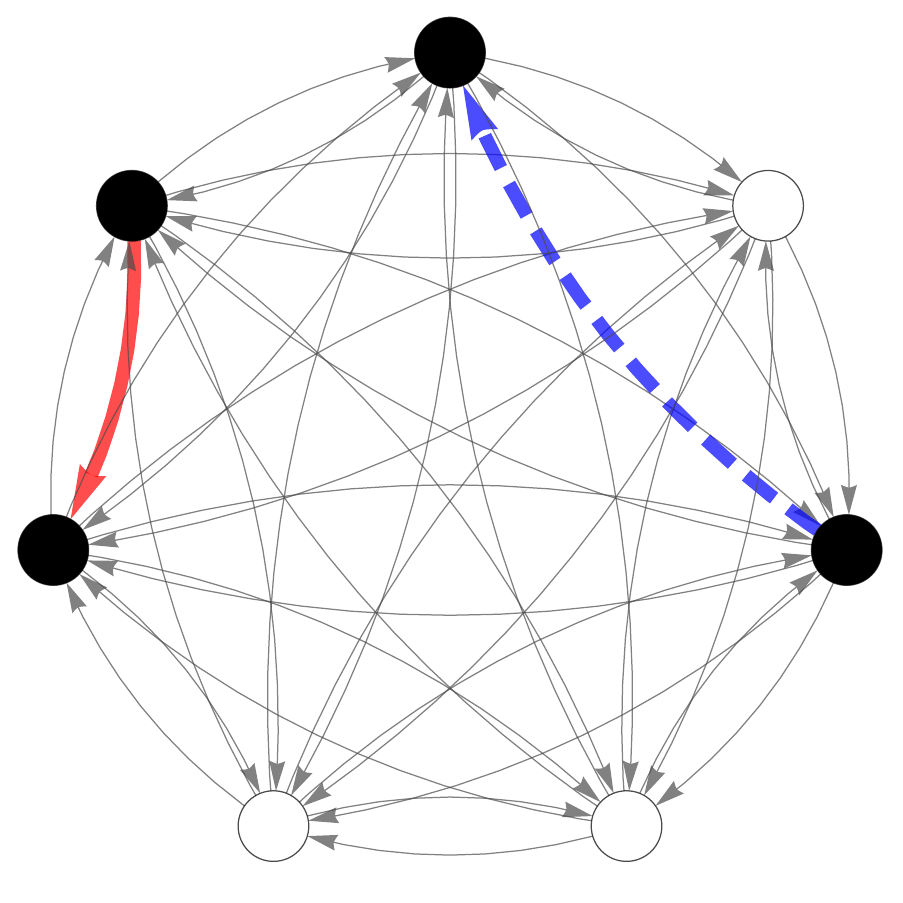}}{Disjoint Motif}
\\
\hline
\end{tabular}
\caption{Six of the relations in the homogeneous coherent configuration described in the text. We have not depicted the identity motif. The thick red edge denotes the first edge in the configuration, the thick dashed blue edge denotes the second edge. }
\label{fig:DJohnson} 
\end{figure}
We next illustrate how to compute the structure constants $\rho^{(k)}_{ij}.$ These integers count how many edges are in a certain relation with two other edges. Specifically given that the pair of edges $(x,y)$ is in relation $i$ the coefficient $\rho^{(k)}_{ij}$ counts the number of edges $z$  such that $(x,z)$ is in relation $k$ and $(z,y)$ is in relation $j$. 
\begin{example}[The Johnson scheme multiplication laws]
The classical Johnson scheme $J(N,2)$ has 3 relations. The identity relation gives the identity matrix, so $\R^{(0)} = \I$. The remaining multiplication laws are 
\begin{align}
\R^{(1)} \R^{(1)} &= 2(N-2) \R^{(0)} + (N-2) \R^{(1)} + 4 \R^{(2)},
\\
\R^{(1)} \R^{(2)} &= \R^{(2)} \R^{(1)} = (N-3) \R^{(1)} + 2(N-4) \R^{(2)},
\\
\R^{(2)} \R^{(2)} &= \frac{(N-2)(N-3)}{2} \R^{(0)} +  \frac{(N-3)(N-4)}{2} \R^{(1)} + \frac{(N-4)(N-5)}{2} \R^{(2)}.
\end{align}
We illustrate the computation of one of these terms, say the coefficient of $\R^{(1)}$ in the expansion of $\R^{(1)} \R^{(1)}$. Obviously we have $(\R^{(1)} \R^{(1)})_{xy } = \sum_z \R^{(1)}_{x z} \R^{(1)}_{zy}$ To compute the coefficient of $\R^{(1)}$ in this product we choose edges $x$ and $y$ such that the pair $(x,y)$ are in the $\R^{(1)}$ relation, meaning they share exactly one vertex. By hypothesis it doesn't matter which pair we choose, so long as they are in the right relation, so choose $x = \{1,2\}$ and $y =\{1,3\}$. The coefficient of $\R^{(1)}$ is the number of edges $z$ that share exactly one  vertex with $x = \{1,2\}$ and exactly one vertex with $y = \{1,3\}$. There are exactly $N-2$ such edges: the edges $\{1,x\}$ where $x \geq 4$ (of which there are $N-3$) and the edge $(2,3)$, giving a coefficient of $N-3+1=N-2$. As a second example we take the coefficient of $\R^{(2)}$ in the product $\R^{(1)} \R^{(2)}$. In this case $x$ and $y$ have to belong to relation $\R^{(2)}$, so take $x = \{1,2\}$ and $y = \{3,4\}$. We want to count the number of $z$ that have exactly one element in common with $\{1,2\}$ and no elements in common with $\{3,4\}$. These are exactly $z = \{1,x\}$ with $x \geq 5$ and $z = \{2,x\}$ with $x \geq 5$. Thus the coefficient of $\R^{(2)}$ in the product $\R^{(1)}
\R^{(2)}$ is $2(N-4)$. 
\end{example}
\begin{example}[The Nykamp-Zhao multiplication laws.]
The computation of one multiplication law satisfied by the adjacency matrices in the Nykamp-Zhao coherent configuration is illustrated in Figure \ref{fig:MultLaw} for $N=7$. This illustrates $\R^{\text{conv}} \R^{\text{disj}} = (N-3) \R^{\text{anti}} + (N-3) \R^{\text{div}} + (N-4) \R^{\text{disj}}$. In this figure the solid edge (red online) denotes the first edge ($x$ in the notation above), the dashed edge the last edge ($y$ above) and the dotted edges ($z$ above) are the intermediate edges to be counted. To compute the coefficient of $\R^{\text{anti}} $ in the product $\R^{\text{conv}} \R^{\text{disj}}$ we choose $(x,y)$ a pair of edges in the anti-chain configuration, and we count 
the number of edges $z$ that form the convergent motif with the first  edge and the disjoint motif with the second  edge. There are $7-3=4$ such edges. The other two subfigures show the computation of the structure coefficients of $\R^{\text{div}}$ and $\R^{\text{disj}}$ respectively. It is easy to see that the remaining structure constants must be zero. For instance the coefficient of $\R^{\text{recip}}$ would count the number of edges that are convergent with one edge and disjoint from the reciprocal edge. There are clearly no edges that satisfy both of those conditions, and thus the coefficient of $\R^{\text{recip}}$ in the expansion of $\R^{\text{conv}} \R^{\text{disj}}$ is zero.
\begin{figure}[h]
\centering
\begin{tabular}{|c|c|c|}
\hline
\subf{\includegraphics[width=40mm]{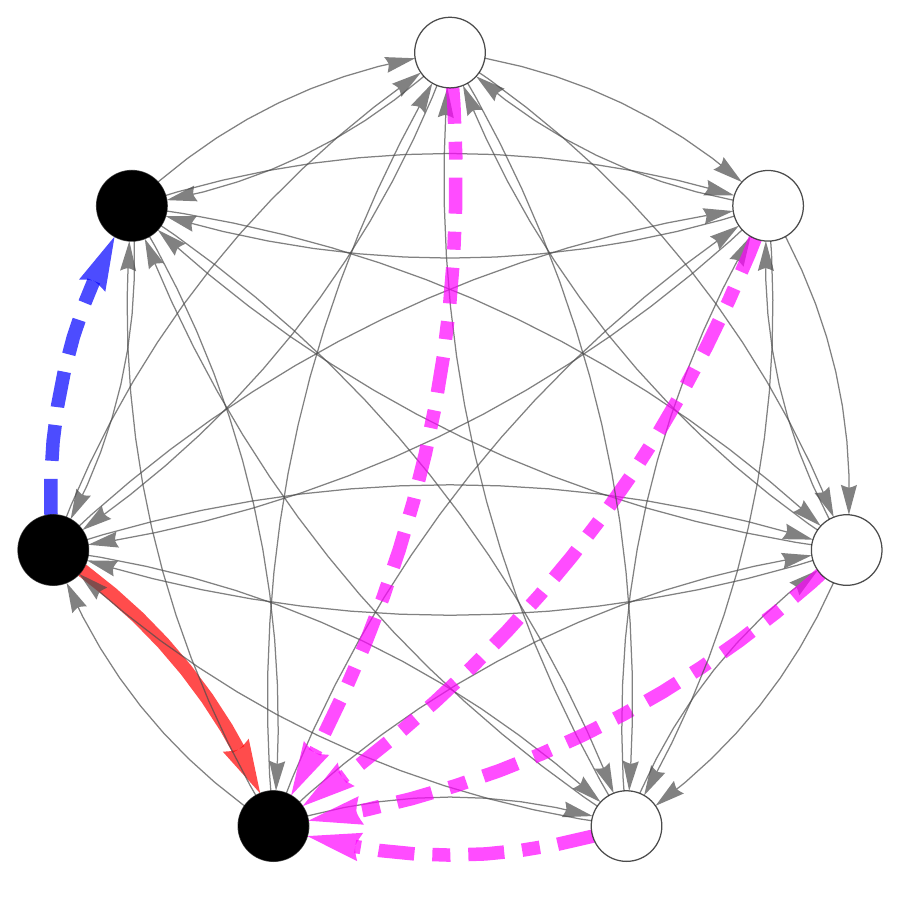}}{Coefficient of $\R^{\text{div}}$ }
&
\subf{\includegraphics[width=40mm]{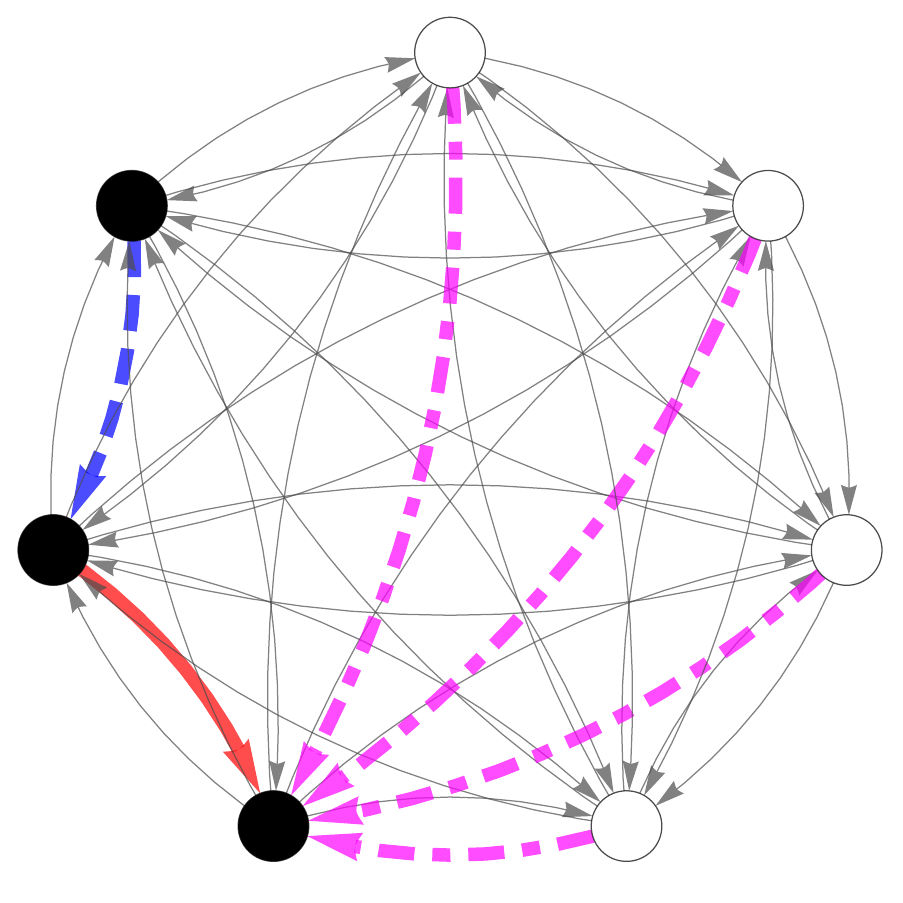}}{Coefficient of $\R^{\text{anti}}$}
& 
\subf{\includegraphics[width=40mm]{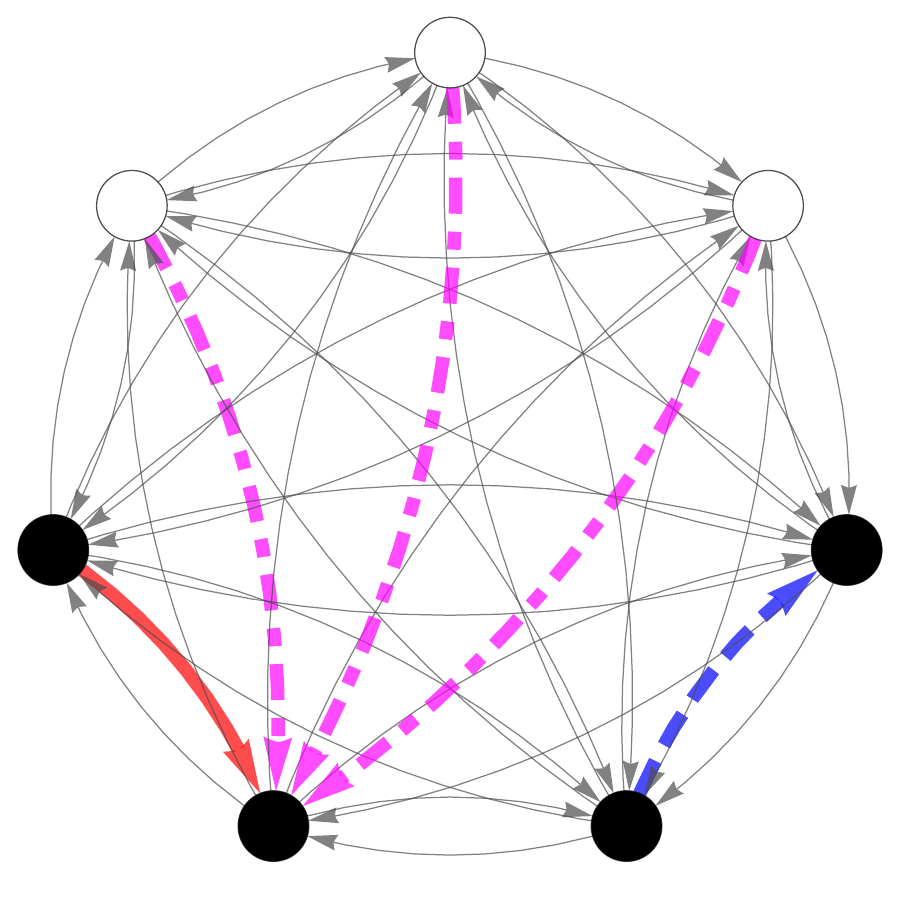}}{Coefficient of $\R^{\text{disj}}$} 
\\
\hline
\end{tabular}
\caption{The multiplication law $\R^{\text{conv}} \R^{\text{disj}} = (N-3) \R^{\text{div}} + (N-3) \R^{\text{anti}} + (N-4) \R^{\text{disj}}$.}
\label{fig:MultLaw} 
\end{figure}
\end{example}

Coherent configurations are often summarized by the table $\sum_k k
\R^{(k)}$, which indicates which subsets belong to which
relations. Since each relation consists of a $(0,1)$ matrix with
disjoint entries the relations follow immediately from such a table. 
Hanaki and Miyamoto give a complete list of all (up to isomorphism) homogeneous coherent
configurations (HCC's) with $|\X| \leq 34$ at {\em
  http://math.shinshu-u.ac.jp/~hanaki/as.} Note that Hanaki and
Miyamoto require an association scheme to have an identity relation
but do not require relations to be symmetric, thus what they call an
association scheme we would call a homogeneous coherent
configuration. There are 243 such HCC's of order 30.  If we take the
relations and edges to be ordered as follows: 
\begin{itemize}
\item $\RR^{(0)}$ -- Identity relation
\item $\RR^{(1)}$ -- Reciprocal relation
\item $\RR^{(2)}$ -- Divergent relation
\item $\RR^{(3)}$ -- Convergent relation
\item $\RR^{(4)}$ -- Chain relation
\item $\RR^{(5)}$ -- Anti-chain relation
\item $\RR^{(6)}$ -- Disjoint relation
\end{itemize}
\begin{align*}
\X = \{ &\Scale[0.75]{ (1, 2), (2, 1), (3, 2), (4, 2), (5, 2), (6, 2), (1, 3), (1, 4), (1,5), (1, 6), (3, 1), (4, 1), (5, 1), }\\
& \Scale[0.75]{(6, 1), (2, 3), (2, 4), (2, 5), (2, 6), (3, 4), (3, 5), (3, 6), (4,
  3), (5, 3), (6, 3), (4, 5), (4, 6),} \\
&\Scale[0.75]{(5, 4), (6, 4), (5, 6), (6, 5)\}} 
\end{align*}
then it is apparent that the Nykamp-Zhao HCC with $N=6$ is isomorphic to
number 99 in Hanaki and Miyamoto's list of schemes of order 30, while the Nykamp-Zhao HCC
with $N=5$ appears as number $51$ in their list of 95 schemes of order 20. 
\section{Algebraic Implications}

An important algebraic fact is that there is a homomorphism from
the  algebra of the adjacency matrices for the relations (of size
$N(N-1) \times N(N-1)$ in the case of the Nykamp-Zhao coherent
configuration) to an algebra of $(d+1) \times (d+1)$ matrices, where
$d+1$ is the number of relations in the coherent configuration
($7\times 7$ in this case). This is generally known as the intersection algebra. This is defined through


\begin{definition} \label{def:rho}
We define a linear map from the algebra generated by $\RR^{(0)}, \RR^{(1)}, \ldots,\RR^{(d)}$ to $(d+1) \times (d+1)$ matrices by
\begin{align*}
\rho \left(\sum_k \alpha_k \R^{(k)} \right)=\sum_k \alpha_k \rho(\R^{(k)})
\end{align*}
where $\rho(\R^{(k)})$ is defined to be the $(d+1)\times (d+1)$ matrix given by $\rho(\R^{(k)})_{ij} = \rho^{(k)}_{ij}$. 
\end{definition}

The algebraic coincidences in the SONETS algorithm basically arise from the fact that the map defined above is an injective homomorphism.  Essentially any linear-algebraic calculation that needs to be done on $\R^{(k)}$ can instead be done at the level of the $\rho^{(k)}$ instead, with the result then lifted back to the $\R^{(k)}$. To begin we state a proposition to this effect but defer the proof to the appendix.


\begin{proposition} \label{prop:rho}
Let $\rho$ be the linear map in Definition \ref{def:rho}, and let $\M$ denote a matrix in the algebra generated by $\RR^{(0)}, \RR^{(1)}, \ldots,\RR^{(d)}$. Then $\rho$ is an injective homomorphism satisfying the following properties:
\begin{itemize}
\item $\rho(\M)$ is diagonalizable if and only if $\M$ is.
\item $\lambda$ is an eigenvalue of $\rho(\M)$ if and only if it is for $\M$ (hence $\M$ has at most $d+1$ distinct eigenvalues).
\item The algebraic multiplicity of an eigenvalue $\lambda$ of $\M$ is
\begin{align*}
\mult(\lambda) = \rank ((\sum_k \beta_k \R^{(k)})_{\bb \in \mathcal{B}_\lambda})
\end{align*}
where $\mathcal{B}_\lambda$ is a basis for the generalized left eigenspace of $\rho(\M)$ for $\lambda$.
\end{itemize}
\end{proposition}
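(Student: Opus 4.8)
My starting point is the observation that $\rho$ is nothing other than the left regular representation of the adjacency algebra $\mathcal{A} := \mathrm{span}\{\R^{(0)},\dots,\R^{(d)}\}$ written in the basis $\{\R^{(k)}\}$. Indeed, reading the multiplication law $\R^{(k)}\R^{(j)} = \sum_i \rho^{(k)}_{ij}\R^{(i)}$ columnwise shows that the $j$-th column of $\rho(\R^{(k)})$ is the coordinate vector of $\R^{(k)}\R^{(j)}$, so $\rho(\M)$ is the matrix of the operator $\A \mapsto \M\A$ on $\mathcal{A}$. The first thing I would verify is that $\rho$ is a \emph{unital} homomorphism: the identity $\rho(\M\M') = \rho(\M)\rho(\M')$ is exactly the associativity relation $\sum_i \rho^{(k)}_{ij}\rho^{(i)}_{ab} = \sum_c \rho^{(k)}_{ac}\rho^{(j)}_{cb}$ among the structure constants, which follows from associativity of matrix multiplication, and $\rho(\I) = I_{d+1}$ since $\I = \sum_{\text{diag}}\R^{(i)}$ is the unit of $\mathcal{A}$. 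Injectivity is then immediate, because $\rho(\M)=0$ forces $\M = \M\,\I = 0$.

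Granting that $\rho$ is a unital injective homomorphism, the two spectral bullet points are formal. For any polynomial $p$ one has $\rho(p(\M)) = p(\rho(\M))$, so $p(\M)=0 \iff p(\rho(\M))=0$ (using injectivity for the nontrivial direction). Hence $\M$ and $\rho(\M)$ share the same minimal polynomial. I would then read off both consequences at once: the eigenvalues, being the roots of the minimal polynomial, coincide, so $\M$ has at most $d+1$ distinct eigenvalues; and $\M$ is diagonalizable iff that minimal polynomial is squarefree iff $\rho(\M)$ is diagonalizable.

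For the multiplicity formula the plan is to use the Riesz projector. Let $\mathbf{P} = q_\lambda(\M)$, where $q_\lambda$ is the Lagrange--Sylvester interpolating polynomial attached to $\lambda$; then $\mathbf{P}$ is an idempotent lying in $\mathcal{A}$ with $\rank(\mathbf{P}) = \mult(\lambda)$, and, because $\M$ and $\rho(\M)$ have the same minimal polynomial, the same $q_\lambda$ yields $\rho(\mathbf{P}) = q_\lambda(\rho(\M))$, the Riesz projector of the small matrix, whose row space is precisely the generalized left eigenspace spanned by $\mathcal{B}_\lambda$. I would then show that the matrices $\sum_k \beta_k \R^{(k)}$, as $\bb$ ranges over $\mathcal{B}_\lambda$, have rows spanning exactly the row space of $\mathbf{P}$. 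The inclusion ``$\subseteq$'' should follow from identifying $\mathrm{span}\{\sum_k\beta_k\R^{(k)} : \bb\in\mathcal{B}_\lambda\}$ with the right ideal $\mathcal{A}\mathbf{P} = \{\A\mathbf{P} : \A\in\mathcal{A}\}$, each element of which has its rows inside the row space of $\mathbf{P}$; the reverse inclusion is free since $\mathbf{P} = \I\cdot\mathbf{P}$ lies in this ideal, recovering the rows of $\mathbf{P}$ itself. Counting dimensions then gives $\rank\big((\sum_k\beta_k\R^{(k)})_{\bb\in\mathcal{B}_\lambda}\big) = \rank(\mathbf{P}) = \mult(\lambda)$.

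The crux, where I expect the real work to lie, is that identification of $\mathrm{span}\{\sum_k\beta_k\R^{(k)}\}$ with the right ideal $\mathcal{A}\mathbf{P}$. Since columns of $\rho(\M)$ encode right multiplication, it is the \emph{right} eigenvectors of $\rho(\M)$ that most directly reproduce the left ideal $\mathbf{P}\mathcal{A}$ and the column space of $\mathbf{P}$; the statement as given uses left eigenvectors and hence row spaces, so matching the two forces a transpose. This is exactly where property (3) of Definition \ref{def:configuration} enters: adjoint-closure makes $\mathcal{A}$ a $*$-algebra under transposition, so $\A\mapsto\A^\top$ permutes the $\R^{(k)}$ and the trace form $\langle\A,\B\rangle=\tr(\A^\top\B)$ is associative and nondegenerate. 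This relates $\rho(\M)^\top$ to the intersection matrix of $\M^\top$, up to conjugation by the diagonal matrix of valencies $v_k=|\RR^{(k)}|$, and thereby converts the left-eigendata of $\rho(\M)$ into the right ideal $\mathcal{A}\mathbf{P}$. Keeping track of those valency weights and checking that they do not alter the rank is the delicate bookkeeping; the cleanest way I see to organize it is to prove the right-eigenvector, column-space version first and then dualize through the $*$-structure.
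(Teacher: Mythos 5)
Your regular-representation framing, and everything you build on it through the diagonalizability bullet, is correct. The homomorphism identity is indeed just associativity (the same computation the paper performs, repackaged); your injectivity argument ($\rho(\M)=\0$ forces $\M=\M\,\I=\0$) is simpler than the paper's, which obtains injectivity from its eigenvalue machinery; and deducing both the eigenvalue correspondence and the diagonalizability equivalence from equality of minimal polynomials is tidier than the paper, which proves the eigenvalue statement by hand (its identity \eqref{eq:product} plus a surjectivity/contradiction argument) and invokes the minimal polynomial only for diagonalizability. Your Riesz-projector plan for the multiplicity formula is also a genuinely different route from the paper's: the paper proves $\rank\le\mult$ one eigenvalue at a time and then forces equality by summing over the whole spectrum, using that $\I$ is a linear combination of the matrices $\sum_k\beta_k\R^{(k)}$; your argument, if completed, would give equality for each $\lambda$ directly.

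The gap is exactly at the step you flag as the crux, and it cannot be closed the way you propose. The conjugation identity you invoke is correct under your convention: writing $\mathbf{D}=\mathrm{diag}(v_0,\dots,v_d)$ with $v_k=|\RR^{(k)}|$, one has $\rho(\M)^\top=\mathbf{D}\,\rho(\M^\top)\,\mathbf{D}^{-1}$. But this says the left generalized eigenvectors of $\rho(\M)$ are $\bb=\mathbf{D}\mathbf{c}$ with $\mathbf{c}$ a right generalized eigenvector of $\rho(\M^\top)$, so the matrices appearing in the statement are $\sum_k v_k c_k\R^{(k)}$ rather than $\sum_k c_k\R^{(k)}$ --- and this rescaling \emph{does} alter the rank. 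Concretely, in the Johnson scheme $J(4,2)$ take $\M=\R^{(1)}$ and $\lambda=-2$, which has multiplicity $2$; here $\mathbf{D}=\mathrm{diag}(6,24,6)$, and the vectors $(2,-1,2)$ and $(1,-2,1)$ are a $\mathbf{D}$-rescaling of one another up to overall scale. Yet $2\R^{(0)}-\R^{(1)}+2\R^{(2)}=3\I+3\R^{(2)}-\1_{6\times 6}$ has rank $2$, while $\R^{(0)}-2\R^{(1)}+\R^{(2)}=3\I+3\R^{(2)}-2\cdot\1_{6\times 6}$ has rank $3$. So the ``delicate bookkeeping'' you defer is the verification of a false statement: under your literal reading of Definition \ref{def:rho} (columns of $\rho(\R^{(k)})$ holding the coefficients of $\R^{(k)}\R^{(j)}$), it is the rank-$3$ matrix that comes from the left eigenvector, and the left-eigenvector form of the formula fails outright.

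What actually resolves the left/right mismatch is a transpose convention, not the $*$-structure. The intersection matrices the paper prints, and the identity \eqref{eq:product} on which its proof runs, are the transposes of what Definition \ref{def:rho} literally prescribes: in the printed Johnson $\rho^{(1)}$, the coefficients $(2(N-2),N-2,4)$ of $\R^{(1)}\R^{(1)}$ appear as a row, not a column. With that convention the coordinates of $\M\A$ form the row vector $\bb^\top\rho(\M)$, so a left generalized eigenvector $\bb$ satisfies the eigen-equation for $\M$ with no conversion at all; the span of the corresponding matrices is $\mathbf{P}\mathcal{A}\ni\mathbf{P}$, and your projector argument closes using \emph{column} spaces: the collective column space equals that of $\mathbf{P}$, whence the rank is $\rank(\mathbf{P})=\mult(\lambda)$. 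Equivalently, keep your convention and prove the statement for right eigenvectors. Either way, commit to a single convention; the valency-weighted dualization is both unnecessary and unworkable.
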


Note the special case that $\rho(\M)$ is diagonalizable if $\M$ is. This is significant to the SONETS problem because the $\R$ matrices, while large, are symmetric and thus diagonalizable. The corresponding elements of the intersection algebra, in contrast, are not symmetric or normal and there is no guarantee a priori that they are diagonalizable.

    
\begin{example}[The Johnson Scheme intersection algebra]
The first example is the classical Johnson scheme. Recall that $\R^{(0)} = \I$ and that
\begin{align*}
\R^{(1)} \R^{(1)} &= 2(N-2) \R^{(0)} + (N-2) \R^{(1)} + 4 \R^{(2)},
\\
\R^{(1)} \R^{(2)} &= \R^{(2)} \R^{(1)} = (N-3) \R^{(1)} + 2(N-4) \R^{(2)},
\\
\R^{(2)} \R^{(2)} &= \frac{(N-2)(N-3)}{2} \R^{(0)} +  \frac{(N-3)(N-4)}{2} \R^{(1)} + \frac{(N-4)(N-5)}{2} \R^{(2)}.
\end{align*}
This gives the homomorphism
\begin{align}
\rho(\R^{(0)}) &= \rho^{(0)} =
\begin{pmatrix}
1 & 0 & 0
\\
0 & 1 & 0
\\
0 & 0 & 1
\end{pmatrix},
\\
\rho(\R^{(1)}) &=  \rho^{(1)} =
\begin{pmatrix}
0 & 1 & 0
\\
2(N-2) & N-2 & 4
\\
0 & N-3 & 2(N-4)
\end{pmatrix},
\\
\rho(\R^{(2)}) &= \rho^{(2)} =
\begin{pmatrix}
0 & 0 & 1
\\
0 & N-3 & 2(N-4)
\\
\binom{N-2}{2} & \binom{N-3}{2} & \binom{N-4}{2}
\end{pmatrix}.
\end{align}
It is straightforward to verify that this is indeed a homomorphism, that the matrices $\rho^{(k)}$ satisfy the same algebraic identities as $\R^{(k)}$. The eigenvalues of a general linear combination $\I + \alpha_1 \rho^{(1)} + \alpha_2 \rho^{(2)}$ are easily computed to be 
\begin{align}
\lambda_0 &= 1 + 2 (N-2) \alpha_1 + \binom{N-2}{2} \alpha_2, \label{eqn:lambda0}
\\
\lambda_1 &= 1 + (N-4)  \alpha_1 - (N-3) \alpha_2, \label{eqn:lambda1}
\\
\lambda_2 &= 1-2\alpha_1 + \alpha_2. \label{eqn:lambda2}
\end{align}
These are the eigenvalues of  $\I + \alpha_1 \R^{(1)} + \alpha_2 \R^{(2)}$ as well, although the multiplicities obviously differ. It follows from the Perron-Frobenius theorem that $\lambda_0$ is a simple eigenvalue of $\I + \alpha_1 \R^{(1)} + \alpha_2 \R^{(2)}.$ It is possible, though tedious, to check that $\lambda_1$ has multiplicity $N-1$ and $\lambda_2$ has multiplicity $\frac{N(N-3)}{2}$, though we will not show this here. 
\end{example}
\begin{example}[The Nykamp-Zhao intersection algebra]
The following are the elements of the intersection algebra. We order the relations as follows: Identity, Reciprocal, Divergent, Chain, Anti-chain, Convergent, Disjoint. 
\begin{gather*}
\rho^{\text{id}} = \rho^{(0)} = \Scale[0.75]{\left(
\begin{array}{ccccccc}
 1 & 0 & 0 & 0 & 0 & 0 & 0 \\
 0 & 1 & 0 & 0 & 0 & 0 & 0 \\
 0 & 0 & 1 & 0 & 0 & 0 & 0 \\
 0 & 0 & 0 & 1 & 0 & 0 & 0 \\
 0 & 0 & 0 & 0 & 1 & 0 & 0 \\
 0 & 0 & 0 & 0 & 0 & 1 & 0 \\
 0 & 0 & 0 & 0 & 0 & 0 & 1 \\
\end{array}
\right)} \quad
\rho^{\text{recip}} = \rho^{(1)} = \Scale[0.75]{\left(
\begin{array}{ccccccc}
 0 & 1 & 0 & 0 & 0 & 0 & 0 \\
 1 & 0 & 0 & 0 & 0 & 0 & 0 \\
 0 & 0 & 0 & 1 & 0 & 0 & 0 \\
 0 & 0 & 1 & 0 & 0 & 0 & 0 \\
 0 & 0 & 0 & 0 & 0 & 1 & 0 \\
 0 & 0 & 0 & 0 & 1 & 0 & 0 \\
 0 & 0 & 0 & 0 & 0 & 0 & 1 \\
\end{array}
\right) }\\
\rho^{\text{div}} = \rho^{(2)} = \Scale[0.75]{\left(
\begin{array}{ccccccc}
 0 & 0 & 1 & 0 & 0 & 0 & 0 \\
 0 & 0 & 0 & 0 & 1 & 0 & 0 \\
 N-2 & 0 & N-3 & 0 & 0 & 0 & 0 \\
 0 & 0 & 0 & 0 & 0 & 1 & 1 \\
 0 & N-2 & 0 & 0 & N-3 & 0 & 0 \\
 0 & 0 & 0 & 1 & 0 & 0 & 1 \\
 0 & 0 & 0 & N-3 & 0 & N-3 & N-4 \\
\end{array}
\right) }\\
\rho^{\text{chain}} = \rho^{(3)} = \Scale[0.75]{\left(
\begin{array}{ccccccc}
 0 & 0 & 0 & 1 & 0 & 0 & 0 \\
 0 & 0 & 0 & 0 & 0 & 1 & 0 \\
 0 & N-2 & 0 & N-3 & 0 & 0 & 0 \\
 0 & 0 & 0 & 0 & 1 & 0 & 1 \\
 N-2 & 0 & 0 & 0 & 0 & N-3 & 0 \\
 0 & 0 & 1 & 0 & 0 & 0 & 1 \\
 0 & 0 & N-3 & 0 & N-3 & 0 & N-4 \\
\end{array}
\right) }\\
\rho^{\text{anti}} = \rho^{(4)} = \Scale[0.75]{\left(
\begin{array}{ccccccc}
 0 & 0 & 0 & 0 & 1 & 0 & 0 \\
 0 & 0 & 1 & 0 & 0 & 0 & 0 \\
 0 & 0 & 0 & 0 & 0 & 1 & 1 \\
 N-2 & 0 & N-3 & 0 & 0 & 0 & 0 \\
 0 & 0 & 0 & 1 & 0 & 0 & 1 \\
 0 & N-2 & 0 & 0 & N-3 & 0 & 0 \\
 0 & 0 & 0 & N-3 & 0 & N-3 & N-4 \\
\end{array}
\right) }\\
\rho^{\text{conv}} = \rho^{(5)} = \Scale[0.75]{\left(
\begin{array}{ccccccc}
 0 & 0 & 0 & 0 & 0 & 1 & 0 \\
 0 & 0 & 0 & 1 & 0 & 0 & 0 \\
 0 & 0 & 0 & 0 & 1 & 0 & 1 \\
 0 & N-2 & 0 & N-3 & 0 & 0 & 0 \\
 0 & 0 & 1 & 0 & 0 & 0 & 1 \\
 N-2 & 0 & 0 & 0 & 0 & N-3 & 0 \\
 0 & 0 & N-3 & 0 & N-3 & 0 & N-4 \\
\end{array}
\right)}
\end{gather*}

\begin{align*}
\rho^{\text{disj}} = \rho^{(6)} = \Scale[0.65]{\left(
\begin{array}{ccccccc}
 0 & 0 & 0 & 0 & 0 & 0 & 1 \\
 0 & 0 & 0 & 0 & 0 & 0 & 1 \\
 0 & 0 & 0 & 0 & N-3 & N-3 & N-4 \\
 0 & 0 & 0 & 0 & N-3 & N-3 & N-4 \\
 0 & 0 & N-3 & N-3 & 0 & 0 & N-4 \\
 0 & 0 & N-3 & N-3 & 0 & 0 & N-4 \\
 (N-3) (N-2) & (N-3) (N-2) & (N-4) (N-3) & (N-4) (N-3) & (N-4) (N-3) & (N-4) (N-3) & (N-5) (N-4)\\ 
\end{array}
\right)
}
\end{align*}
The algebraic identities satisfied by $\{\R^{(k)}\}_{k=0}^6$ can be read off from rows of elements of the intersection
algebra. For instance the fifth row of $\rho^{\text{chain}}$ is
$(N-2,0,0,0,0,N-3,0)$. The fifth relation is the Anti-Chain relation,
so this row implies  that $\R^{\text{chain}}\R^{\text{anti}} = (N-2)
\R^{\text{id}} + (N-3) \R^{\text{conv}}$.

In the context of network generation one can assume a symmetric
covariance matrix, so the
coefficients of the chain and anti-chain relations are assumed equal. The
eigenvalues of $\bs(\ba)$ can be found by computing the eigenvalues of
the corresponding $7 \times 7$ matrix $\rho(\bs(\ba))$. This can be done
analytically, and obviously implies that $\bs$ can have no more than $7$
distinct eigenvalues. In fact there are only five distinct
eigenvalues, as $\lambda_{4/5}$ have multiplicity two. It is still
somewhat surprising that the eigenvalues have such simple dependence
of $\ba$: linear or algebraic of degree two. There is an algebraic reason for this but it is
somewhat involved, and not terribly important to the problem of random
network generation, so we will not address this issue in the current
paper.
  
\end{example}


\subsection{Numerical Implications for the SONETS method}

The fact that there is a coherent configuration underlying the SONETS algorithm has some implications for numerically implementing the method. There are two basic issues to be addressed. The first is understanding the region in $\ba$ space where $\bs(\ba)$ is positive definite and represents a valid covariance. The second is finding an efficient way to compute $\bs^{\frac12}(\ba).$ Of course one can always do this using the spectral theorem for symmetric matrices, but for large networks this can be costly. The first problem has essentially been solved, as we have given formula for the eigenvalues as a function of $\ba$ valid for any $N$. In this section we show how to compute $\bs^{\frac12}(\ba)$ very efficiently - in time independent of the size of the network. 

We would like to emphasize at this point that the work of Nykamp and Zhao, particularly Zhao's thesis\cite{Zhao.2012}, presaged a lot of the ideas in this section. While they did not have the full algebraic structure of the problem Zhao gives an asymptotic expansion (for $N$ large) for computing $\bs^{\frac12}(\ba)$ which is closely related to what we will talk about in this section. What we show in this section is that, for basically the same amount of computation, one can actually compute $\bs^{\frac12}(\ba)$ exactly. 

In the  interest of simplicity we first discuss the extraction of the square root for the Johnson scheme, which is simpler for several reasons, and then we will discuss the problem for the directed scheme. 

\subsection{The Johnson scheme and undirected graph generation.}
For the Johnson scheme the analogous problem would be as follows. We have a covariance matrix of the form 
\begin{align*}
\bs(\ba) = I + \alpha_1 \R^{(1)} + \alpha_2 \R^{(2)}
\end{align*}
for which we would like to find the matrix square root. One way to do this would be to find $\beta_{0,1,2}$ such that 
\begin{align*}
(\beta_{0} \I + \beta_{1} \R^{(1)} + \beta_2 \R^{(2)})^2 =  \I + \alpha_1 \R^{(1)} + \alpha_2 \R^{(2)}. 
\end{align*}
Using the fact that fact that the $R^{(k)}$ form a closed algebra this is equivalent to the following system of coupled quadratic equations:
\begin{align}
\beta_0^2 + 2 (N-2) \beta_1^2 + \binom{N-2}{2} \beta_2^2 &= 1, \label{eqn:nlm1}
\\
(N-2) \beta_1^2 + 2 \beta_0 \beta_1 + 2(N-3) \beta_1\beta_2 + \binom{N-3}{2} \beta_2^2 &= \alpha_1, \label{eqn:nlm2}
\\
4 \beta_1^2 + 2 \beta_0\beta_2 + 4 (N-4) \beta_1 \beta_2 + \binom{N-4}{2} \beta_2^2 &= \alpha_2. \label{eqn:nlm3}
\end{align}
At this point we should say a word about multiplicity of
solutions. The covariance matrix $\bs$ is an
$\binom{N}{2}\times\binom{N}{2}$ matrix, so there are in principle
$2^{\binom{N}{2}}$ possible matrix square roots - there is one sign
choice for each eigenvalue. We are only interested in one, say the
unique positive definite one. The equations
(\ref{eqn:nlm1}--\ref{eqn:nlm3}) will have eight real solutions if
$\alpha_{1,2}$ are such that $\bs$ is positive definite. This is
because, by assuming that $\bs^{\frac12}(\ba)$ lies in the  algebra, we are forcing the square root to have three invariant subspaces, so there is one sign choice for each invariant subspace. 

Rather than attempt to solve the system of coupled quadratic equations (\ref{eqn:nlm1}--\ref{eqn:nlm3}) directly  it is easier to work through the eigenvalues. Since there are three distinct eigenspaces if all of the eigenvalues agree the matrices must agree. At the level of eigenvalues, using (\ref{eqn:lambda0}--\ref{eqn:lambda2}), we have the equations 
\begin{align}
\left(\beta_0+2 (N-2)\beta_1 + \binom{N-2}{2} \beta_2\right)^2 &= 1 +2 (N-2)\alpha_1 + \binom{N-2}{2} \alpha_2,
\\
\left(\beta_0+ (N-4)\beta_1 - (N-3) \beta_2\right)^2 &= 1 + (N-4)\alpha_1 - (N-3) \alpha_2,
\\
\left(\beta_0-2 \beta_1 + \beta_2\right)^2 &= 1 - 2 \alpha_1+\alpha_2,
\end{align}
which is equivalent to 
\begin{align*}
\begin{pmatrix}
1 & 2(N-2)& \binom{N-2}{2}
\\
1 & N-4 & -N+3
\\
1 & -2 & 1
\end{pmatrix}
\begin{pmatrix}
\beta_0
\\
\beta_1
\\
\beta_2
\end{pmatrix}
=
\begin{pmatrix}
\pm \sqrt{1 + 2(N-2)\alpha_1 + \binom{N-2}{2} \alpha_2 }
\\
\pm \sqrt{1 + (N-4)\alpha_1 - (N-3) \alpha_2}
\\
\pm \sqrt{1 - 2 \alpha_1+\alpha_2}
\end{pmatrix}
\end{align*}
where the $\pm$ are independent. The matrix is invertible for $N > 2$ so this gives an explicit formula for the eight roots of (\ref{eqn:nlm1}--\ref{eqn:nlm3}).

Having done this it is straightforward to implement the SONETS
procedure for an undirected graph. We compute $\bs^{\frac12}(\ba)\bo$,
an then perform thresholding on $\bs^{\frac12}(\ba)\bo$ to determine
the presence or absence of a particular edge. In this example, and
most of the numerical examples in this paper the graphs have $N=100$
vertices and $\alpha_0=1$, and the thresholding level $x$ is chosen to make the probability
of a single edge ${\mathbb P}(\omega_i > x)=.1$. The other two parameters are chosen
to satisfy $2(N-2) \alpha_1 + \binom{N-2}{2} \alpha_2=0.$ A word about
this choice is in order. The covariance matrices considered here
always have the vector $(1,1,1,\ldots,1)^t$ as an eigenvector: in this
particular case the corresponding eigenvalue is $\alpha_0 + 2 (N-2) \alpha_1
+ \binom{N-2}{2} \alpha_2.$ If each $\omega_i$ has unit variance then
$\bo . (1,1,1,\ldots,1)$ is, by the central limit theorem, typically of the
order of $\sqrt{N}$. If  $\alpha_0 + 2 (N-2) \alpha_1
+ \binom{N-2}{2} \alpha_2$ is large (meaning much larger than $O(1)$)
then the mean of $\bs^{\frac12}(\ba)$  will typically be much larger than $O(\sqrt{N}).$ What happens in this case
is that realizations of the graph will tend to either have very few edges, or
very many edges, and the vertex degree distribution of a single
realization will look nothing like the average distribution. For this
reason in all numerics examples we choose the coefficients so that the
eigenvalue of $\bs$  corresponding to the vector  $ (1,1,1,\ldots,1)$
is $O(1)$. 
 
In Figure \ref{fig:johnson} we give the results of some numerical
simulations for $\alpha_1=\{0.0,~0.2,~0.4\}$. The left-hand panels depict a single realization of the
random graph. The graph is drawn with vertices of high degree located
most centrally, while those of the lowest degree lie near the periphery. The right-hand panels give a histogram of the
vertex degrees from an ensemble of one hundred realizations of the random
graph, with a dark square denoting the sample mean vertex degree. Since the single
edge probability is $p=0.1$ the expected value of the vertex degree
is $9.9$, and the sample mean is quite close to this value in all of the
experiments.   The case $\alpha_1=0$ corresponds exactly to the
Erd\H{o}s-R\'enyi case. Here we see a roughly normal distribution of
the vertex degree about the mean. Recall that  $\alpha_1$ measures correlations between
edges that share a vertex. Increasing  this correlation coefficient to
$\alpha_1 = 0.2$ we induce correlations between edges sharing a
vertex. This produces a dramatic broadening of the distribution: there
are many more vertices of high degree as well as many more vertices of low degree.  This
becomes even more pronounced as the coefficent is further increased to  $\alpha_1=0.4$.

\begin{figure}[H]
\centering
\begin{tabular}{|c|c|}
\hline
\includegraphics[width=7.5cm,height=5.5cm,keepaspectratio]{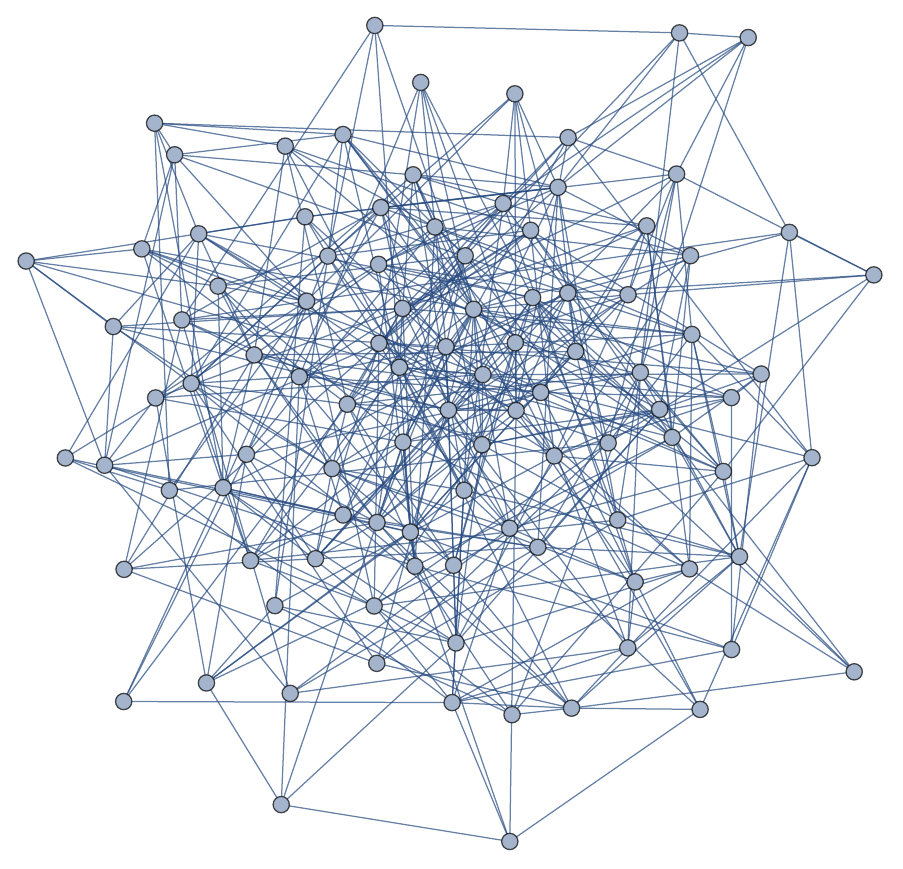} &
\includegraphics[width=7.5cm,height=5.5cm,keepaspectratio]{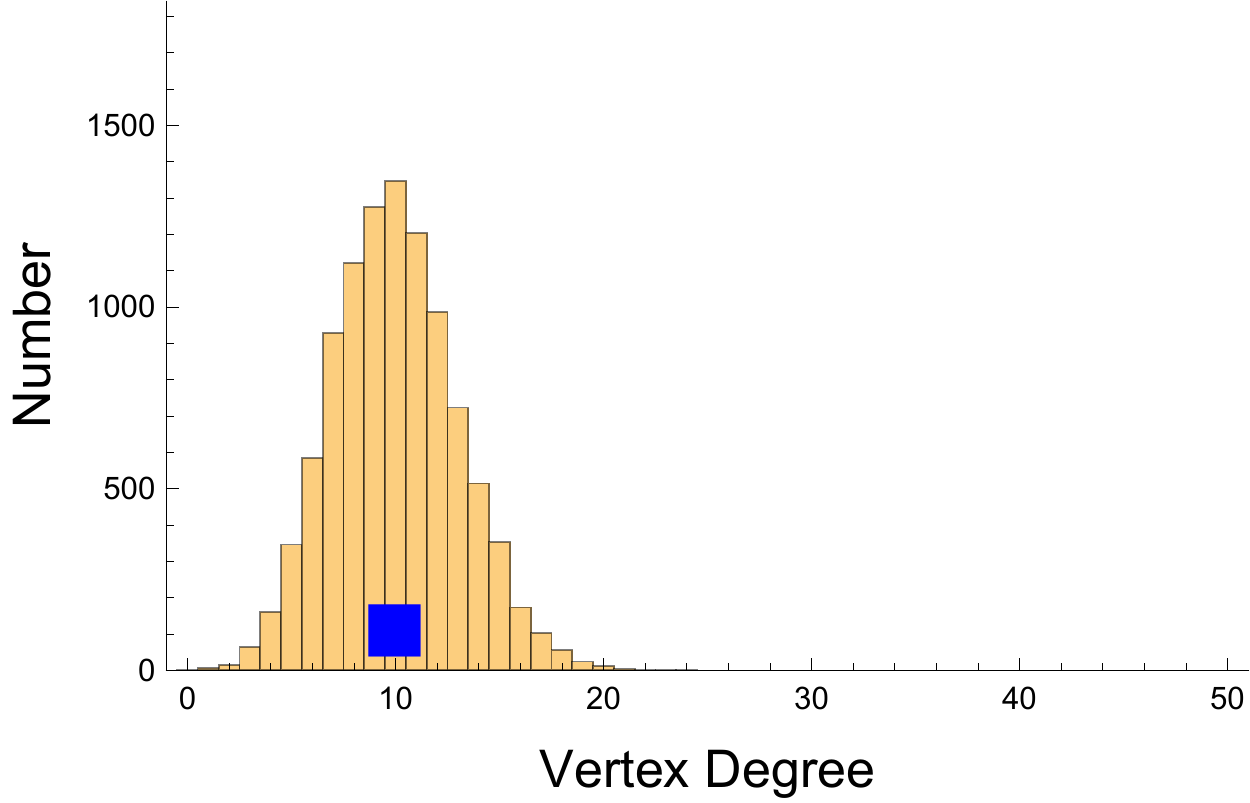}
\\
\hline 
\includegraphics[width=7.5cm,height=5.5cm,keepaspectratio]{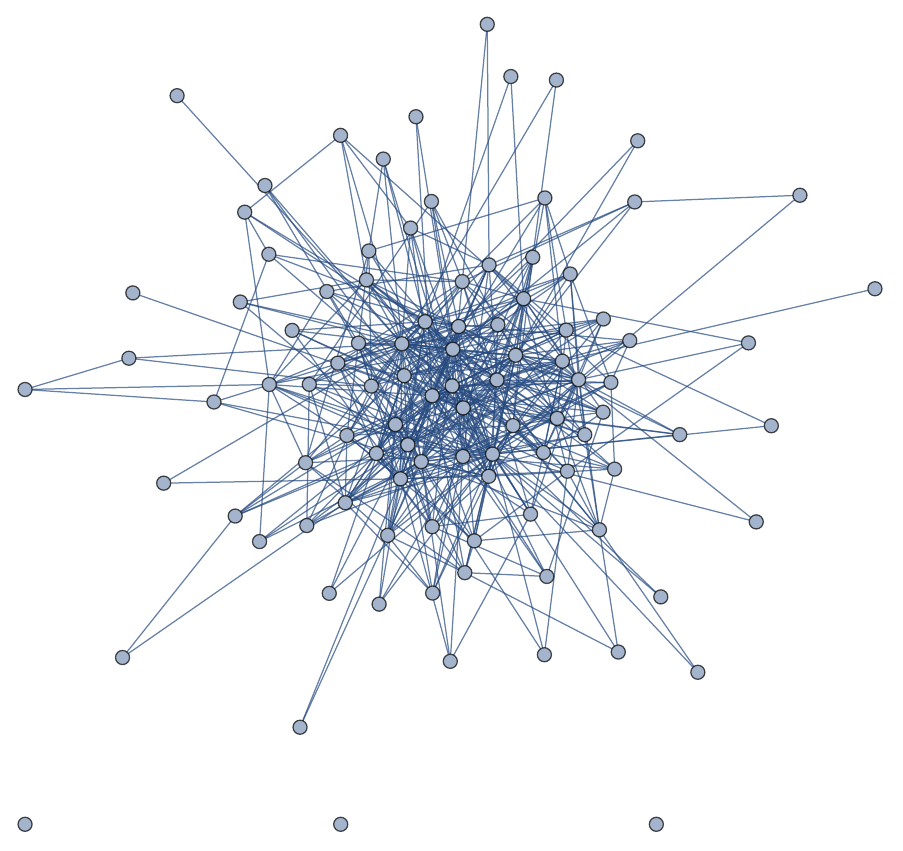} & \includegraphics[width=7.5cm,height=5.5cm,keepaspectratio]{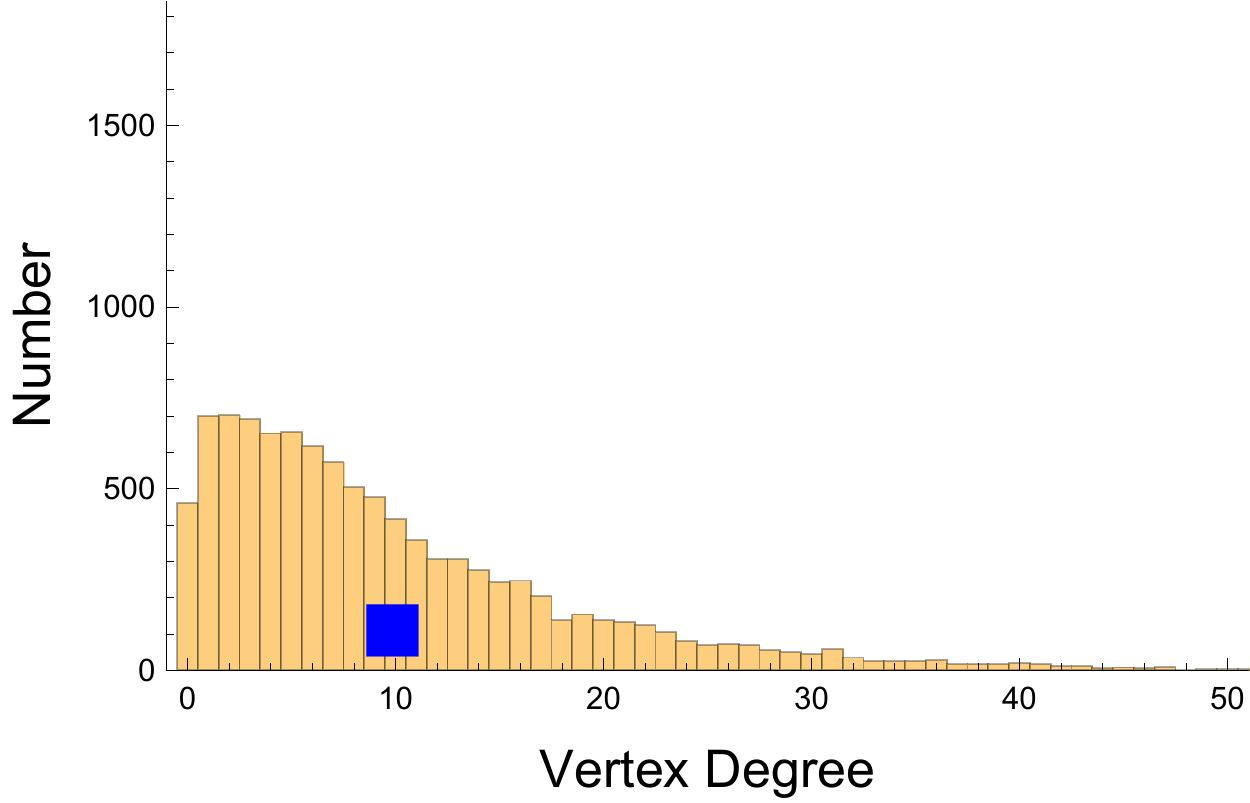}
\\
\hline
\includegraphics[width=7.5cm,height=5.5cm,keepaspectratio]{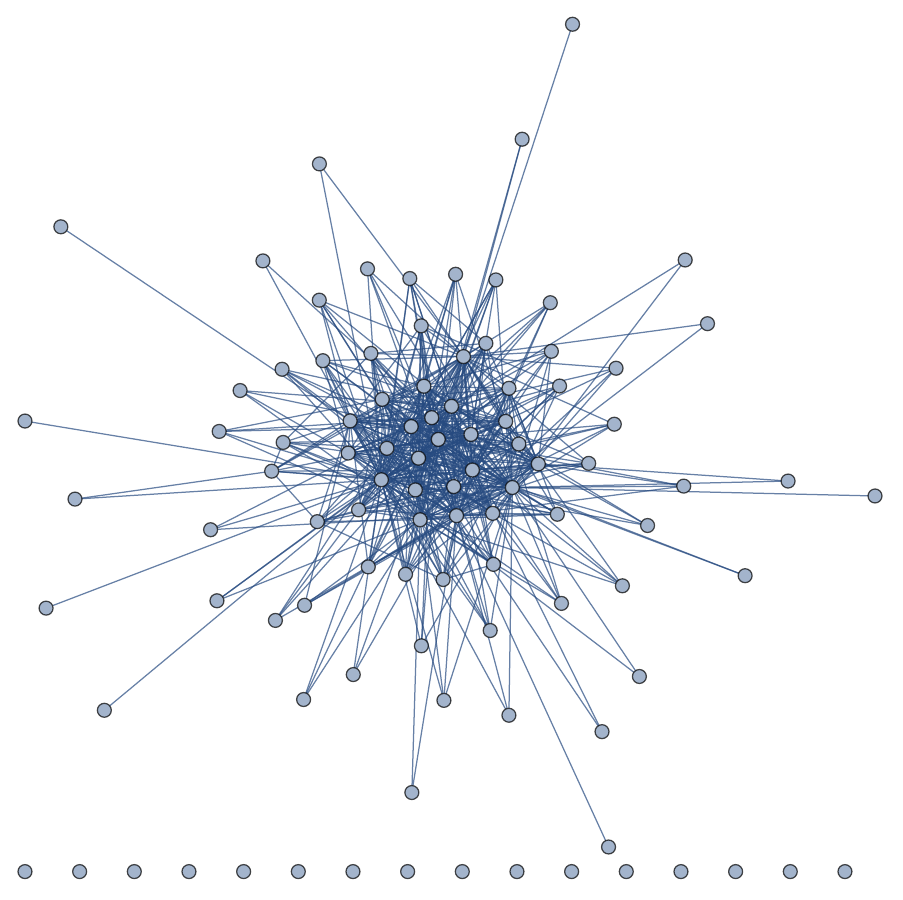} & \includegraphics[width=7.5cm,height=5.5cm,keepaspectratio]{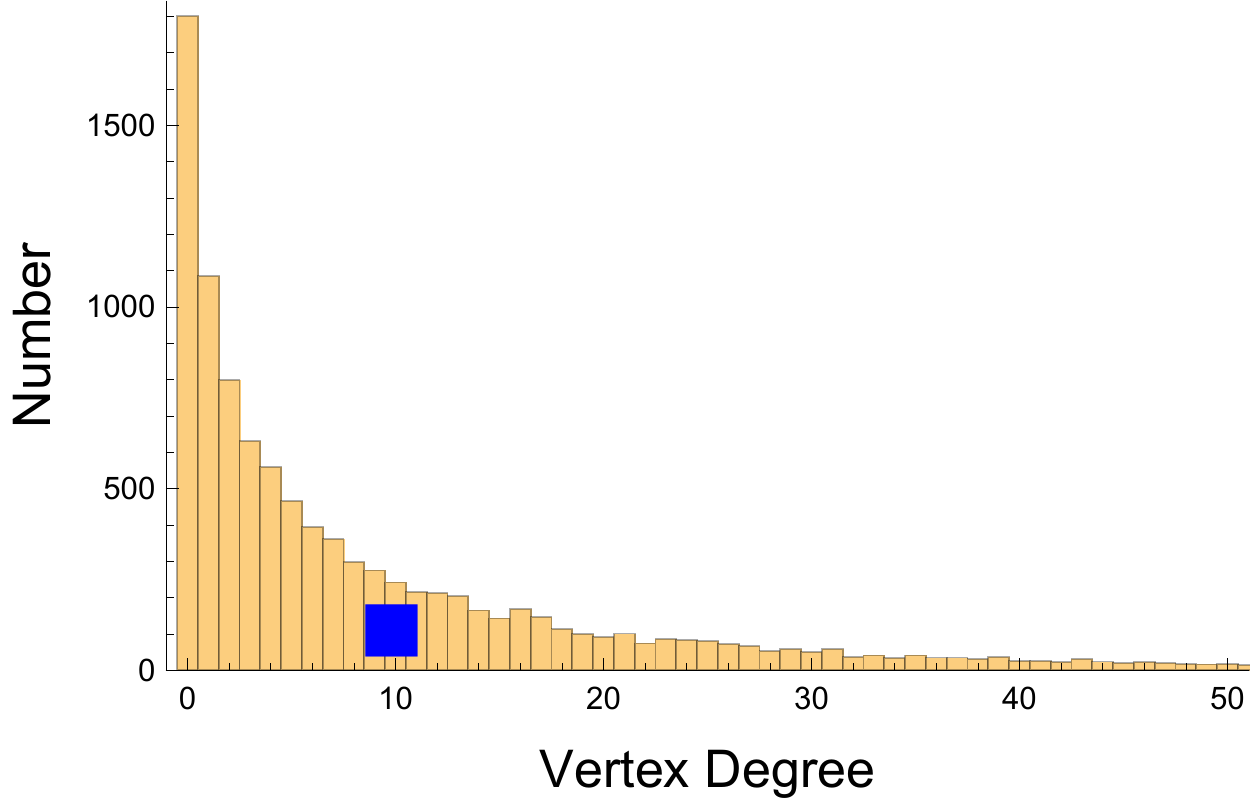}
\\
\hline
Centrality Graph  &
Histogram of Vertex Degree \\
\hline
\end{tabular}
\caption{Random graphs ($N = 100$ vertices) generated with the
  undirected analog of the Nykamp-Zhao algorithm. The parameters were
  chosen so that $\alpha_0 = 1$ and $2(N-2) \alpha_1 +
  \binom{N-2}{2} \alpha_2 = 0$. The lefthand panels depict one
  realization of the random graph, with the graph drawn so that higher
degree vertices are located more centrally and lower degree vertices
are further out, with unconnected vertices arranged along the bottom
edge. The righthand panels represent histograms of the vertex degree
drawn from 100 realizations of the random graph. The parameter values
are (to to bottom) $\alpha_1 = 0.0, 0.2, 0.4 $.}
\label{fig:johnson}
\end{figure}

\subsection{The Nykamp-Zhao HCC and random directed graph generation.}

There are a couple of minor complications when moving from the Johnson association scheme to the Nykamp-Zhao HCC. It is straightforward to compute that, owing to the algebra satisfied by the Nykamp-Zhao HCC we have the following identity
\begin{gather*} 
(\beta_0 \R^{\text{id}} + \beta_1 \R^{\text{recip}} + \beta_2  \R^{\text{div}} + \beta_3 \R^{\text{chain}} + \beta_3
\R^{\text{anti}} + \beta_4 \R^{\text{conv}} + \beta_5 \R^{\text{disj}})^2 =
\\
\alpha_0 \R^{\text{id}} + \alpha_1 \R^{\text{recip}} + \alpha_2  \R^{\text{div}} + \alpha_3 \R^{\text{chain}} + \alpha_3
\R^{\text{anti}} + \alpha_4 \R^{\text{conv}} + \alpha_5 \R^{\text{disj}}
\end{gather*}
where the coefficients $\{\alpha_i\}_{i=0}^5$ and
$\{\beta_i\}_{i=0}^5$ are related through 
\begin{align} \label{eqn:SquareRootMap}
\alpha_0 &= \beta_0^2 + \beta_1^2 + (N-2) (\beta_2^2 + 2 \beta_3^2 + \beta_4^2) + (N-3)(N-2) \beta_5^2,
\\
\alpha_1 &= 2 \beta_0 \beta_1 + (N-2) (2 \beta_2 \beta_3 + 2 \beta_3 \beta_4) + (N-2)(N-3) \beta_5^2,
\\
\alpha_2 &= 2 \beta_0 \beta_2 + 2 \beta_1 \beta_3 +(N- 3) \beta_3^2 + 2\beta_3 \beta_4 +2(N- 3) \beta_3\beta_5
\\
&+ 2(N- 3) \beta_4 \beta_5 + (N-3(N-4)) \beta_5^2 + (N-3) \beta_2^2, \nonumber
\\
\alpha_3 &= \beta_1 \beta_2 + 2 \beta_0 \beta_3 + (N-3) \beta_2\beta_3 + a_3^2 + \beta_1\beta_4 + \beta_2 \beta_4 +  (N-3) \beta_3 \beta_4
\\
&+ (N-3) \beta_2 \beta_5 + 2(N-3) \beta_3 \beta_5 + (N-3) \beta_4 \beta_5 + (N-3)(N-4) \beta_5^2, \nonumber
\\
\alpha_4 &= 2 \beta_1 \beta_3 + 2 \beta_2 \beta_3 + (N-3) \beta_3^2 + 2 \beta_0\beta_4 + (N-3) \beta_4^2
\\
&+ 2(N-3) \beta_2 \beta_5 + 2(N-3) \beta_3 \beta_5 + (N-3)(N-4) \beta_5^2, \nonumber
\\
\alpha_5 &= 2 \beta_2 \beta_3 + 2 \beta_3^2 + 2 \beta_2 \beta_4 + 2 \beta_3 \beta_4 + 2 \beta_0 \beta_5 + 2 \beta_1 \beta_5 + 2(N-4) \beta_2 \beta_5
\\
&+ 4(N-4) \beta_3 \beta_5 + 2(N-4) \beta_4 \beta_5 + (N-4)(N-5) \beta_5^2. \nonumber
\end{align}
Here we have assumed the symmetric case, where the coefficients of the chain and anti-chain motifs are the same. Note that similar equations were  derived by Zhao in his thesis work: on pages 94 and 95 in Appendix A of Zhao's thesis the first five of these equations appear (with $\beta_5=0$) as (A.3)-(A.7). The reason $\beta_5=0$ is because Nykamp and Zhao do not consider the disjoint motif. Note that this implies that the algorithm, as presented by Zhao, induces correlations among disjoint edges, as $\alpha_5$ is typically not zero when $\beta_5=0$. The algorithm presented in Zhao's thesis presents an approximate method for solving the above coupled system of quadratic equations by assuming $\beta_{\text{div}/\text{chain}/\text{conv}}=O(\frac{1}{\sqrt{N}})$ and relating the resulting reduced system to the numerical extraction of the square root of a $2 \times 2$ symmetric matrix. The main point of this section is to observe that one can, for essentially the same computational work, solve this system exactly rather than approximately. 

In the previous Johnson scheme calculation there were three eigenvalues that depended linearly on the three coefficients defining an element of the algebra. This allowed us the ``diagonalize'' the solution of the coupled quadratic equations. In the Nykamp-Zhao HCC there are only five distinct eigenvalues, and seven parameters that define an element of the algebra (six if we are assuming a symmetric matrix), so obviously the eigenvalues cannot be used as coordinates in the same way. Further the map between the parameters of the algebra and the eigenvalues is not linear, so the inversion of the map becomes more complicated. However there is still a simple way to extract the square root of a linear combination of the matrices $\R^{(k)}$ in computational time which is independent of the size of the graph. To begin we note a couple of facts:
\begin{itemize}
\item The seven matrices $\R^{(k)}$ and the corresponding matrices $\rho^{(k)}$ are linearly independent as vectors in $\RRR^{N(N-1) \times N(N-1)}$ and $\RRR^{49}$ respectively.
\item The map between the algebra spanned by $\R^{(k)}$ and that
spanned by $\rho^{(k)}$ is therefore invertible.
\item Given $\rho = \sum_k \alpha_k \rho^{(k)}$ in the intersection algebra one can recover the coefficients $\alpha_k$ by solving the linear system
\begin{align*}
\G \ba = \bb
\end{align*}
where $\beta_k = \tr(\rho^\top \rho^{(k)})$ and 
\begin{align*}
\Scale[0.45]{\G = \left(
\begin{array}{ccccccc}
 7 & 1 & 3 N-10 & N-4 & N-4 & 3 N-10 & N^2 - 9N + 20
 \\
 1 & 7 & N-4 & 3 N-10 & 3 N-10 & N-4 & N^2- 9N+20
 \\
 3 N-10 & N-4 & 7 N^2-40 N+66 & N^2-8 N+18 & 3 N^2-20 N+34 & N^2-8 N+16 & 3 N^3-33N^2+126 N-166
   \\
 N-4 & 3 N-10 & N^2-8 N+18 & 7 N^2-40 N+66 & N^2-8 N+16 & 3 N^2-20 N+34 & 3N^3-33
   N^2+126 N-166
   \\
 N-4 & 3 N-10 & 3 N^2-20 N+34 & N^2-8 N+16 & 7 N^2-40 N+66 & N^2-8 N+18 & 3 N^3-33
   N^2+126 N-166
   \\
 3 N-10 & N-4 & N^2-8 N+16 & 3 N^2-20 N+34 & N^2-8 N+18 & 7 N^2-40 N+66 & 3 N^3-33
   N^2+126 N-166
   \\
 N^2-9 N+20 & N^2-9 N+20 & 3 N^3-33 N^2+126 N-166 & 3 N^3-33 N^2+126 N-166 & 3 N^3-33
   N^2+126 N-166 & 3 N^3-33 N^2+126 N-166 & 7 N^4-94 N^3+499 N^2-1232 N+1186
\end{array}
\right)}
\end{align*}
is the Gram matrix of $\{\rho^{(k)}\}_k$: $\G_{ij} = \tr((\rho^{(i)})^\top \rho^{(j)}).$ 
\end{itemize}   
It is easy to check that the Gram matrix $\G$ has a determinant that is
a polynomial of degree $12$ in $N$ with no real roots, and is thus
always invertible. Therefore an efficient way to compute the (positive definite) square root of a positive definite covariance matrix $\bs(\ba) = \sum_k \alpha_k \R^{(k)}$ is as follows: 
\begin{enumerate}
\item Compute the image of $\bs$ under the homomorphism: $\varsigma =
  \rho(\bs)=\rho(\sum_k \alpha_k \R^{(k)}$.
\item Compute the square root of $\varsigma$ spectrally in the usual
  way: $\varsigma^{\frac12} = \U \bl^{\frac12} \U^{-1}$, where
  $\varsigma = \U \bl \U^{-1}$. While non-symmetric $\varsigma$ is
  diagonalizable with positive eigenvalues.
\item Compute the coefficients $\beta_k$ in the decomposition of $\varsigma^{\frac12} = \sum_k \beta_k \rho^{(k)}$ by solving $\sum_j \G_{ij} \beta_j = \tr((\varsigma^{\frac12} )^\top \rho^{(i)})$.
\item Pull back to the covariance via $\bs^{\frac12} = \sum_k \beta_k \R^{(k)}$.   
\end{enumerate}
We will generally assume that positive definite square root is the one
taken, although any of the $2^5 = 32$ branches of the square roots are
compatible with the algebra may be chosen. As a final practical note
we remark that for large networks all of the matrices $\R^{X}$ are
sparse except for $\R^{Disj}$. The matrices sum to the
matrix with entries $1$, which is rank one. It is simple to multiply any
vector ${\bf x}$ by the matrix of all ones, since it is rank one, so
one can efficiently multiply any vector by the covariance using sparse
techniques.    

We refer the reader to the original papers of Nykamp, Zhao and
collaborators for more numerical studies, but we present a few numerical simulations of graphs
generated using the SONETS scheme as presented here. As one might
expect our results are qualitatively similar. To keep things simple we
vary only two parameters, $\alpha^{\text{recip}}$ and $\alpha^{\text{conv}}$. As a
reminder $\alpha^{\text{recip}}$ induces correlations between a directed edge and the
reciprocal (oppositely directed) edge, while $\alpha^{\text{conv}}$ induces correlations
between edges that are oriented into the same vertex. Figure
(\ref{fig:nykamp}) presents some numerical experiments. As in the
previous experiments we have taken a random graph with $N=100$
vertices and a threshold chosen to give an  individual edge
probability of $p=0.1$.  The top row
shows $\alpha^{\text{recip}} = \alpha^{\text{conv}}=0.$ 
The expected number of edges with no reciprocal edge in this
case is $9900\times(.1)\times(1-.1) =891$, while the expected number
of  edges where the reciprocal edge is also present is
$9900\times(.1)\times(.1)=99$. The leftmost panel depicts a single
realization of the random graph, again drawn with the vertices of
highest degree located most centrally. This realization had $867$ single edges
and $98$ reciprocal edge pairs. The middle figure gives a pixel plot
of the adjacency matrix of the graph -- each block is black if the
corresponding edge is present and white if the edge is absent. The
final panel depicts a histogram of the
distribution of the in-degree of the vertices along with a smooth
curve approximating the distribution of the out-degree. As one might
expect both the in and out-degree 
are roughly normal with a mean of  $\approx 9.9$. The sample mean
in-degree and out-degree (which must be the same) are indicated by the
square and diamond respectively. Note that here we have chosen the
eigenvalue corresponding to eigenvector $(1,1,1,\ldots,1)^t$ to be
zero, not one. This implies that the case $\alpha_1=0$ is not quite
the ER case, since $\alpha_2$ is small and negative, indicating that
disjoint edges are slightly anti-correlated. Numerics on the pure ER
case looked quite similar. 

The next row shows the effect of increasing correlations between
reciprocal edges by taking $\alpha^{recip}=0.75$. The graphs and histograms look quite similar, as do the pixel
plots, though the second pixel plot appears to have more symmetry across the diagonal
than the first picture. This is born out by the table below, which
gives an average of the number of missing edges, single edges (those
where the reciprocal edge is not present) and reciprocal edge
pairs for one hundred realizations of a random graph. \begin{figure}[H]
\centering
\begin{tabular}{|c|c|c|c|}
\hline
$(\alpha^{\text{recip}},\alpha^{\text{conv}})$ & $\#$ absent edges & $\#$ edges not in reciprocal motif & $\#$ edges in reciprocal motif
\\
\hline
$(0,0)$ & 8912.49 & 887.63 & 99.88
\\
\hline
$(0.75,0)$ & 8905.86 & 485.78 & 508.36
\\
\hline
$(0,0.75)$ & 8895.67 & 903.99 & 100.34
\\
\hline
\end{tabular}
\end{figure}
While the average number of edges has not substantially changed
from the first experiment, and the histograms of in-degree and
out-degree also look quite similar, the number of reciprocal edge pairs has gone
up dramatically, from roughly one tenth of the total edges to just over half of the total edges.

The final sequence of plots depicts the case where correlations are
induced between edges incident to the same vertex by increasing
$\alpha^{conv}$, but reciprocal edges are uncorrelated
$\alpha^{recip}=0$.  For these parameter values we have many
unconnected vertices, which are not drawn. 
This should broaden the distribution of the in-degree (which is
governed by $\alpha^{conv}$ but is not expected to markedly change the
distribution of the out-degree (governed by $\alpha^{div}$). In this case all
three plots differ markedly from the first two. The distribution of
in-degrees (histogram) is much broader, though the distibution of
out-degrees (continuous curve) does
not seem to have changed appreciably.  There are many vertices of low in-degree 
together with a few vertices of very high in-degree -- substantially
higher than occured in the first two sets of experiments. In fact the
scale chosen cuts off the total number of vertices of degree zero:
there were about $5500$ over the one hundred realizations, so on
average more than half the vertices were unconnected. The pixel plot
shows strong layering, as one might expect, with the horizontal lines
representing vertices with a high in-degree and the horizontal white
stripes representing vertices with low in-degree. Increasing
$\alpha^{\text{div}}$, conversely, would lead to vertical striping, as
well as a broadening of the  distribution of out-degree.  Also note
from the previous table that reciprocal motif again occurs with the probability
that one would expect based on the assumption of independent edges.

\begin{figure}[H]
\centering
\begin{tabular}{|c|c|c|}
\hline
\includegraphics[width=5.5cm,height=4cm,keepaspectratio]{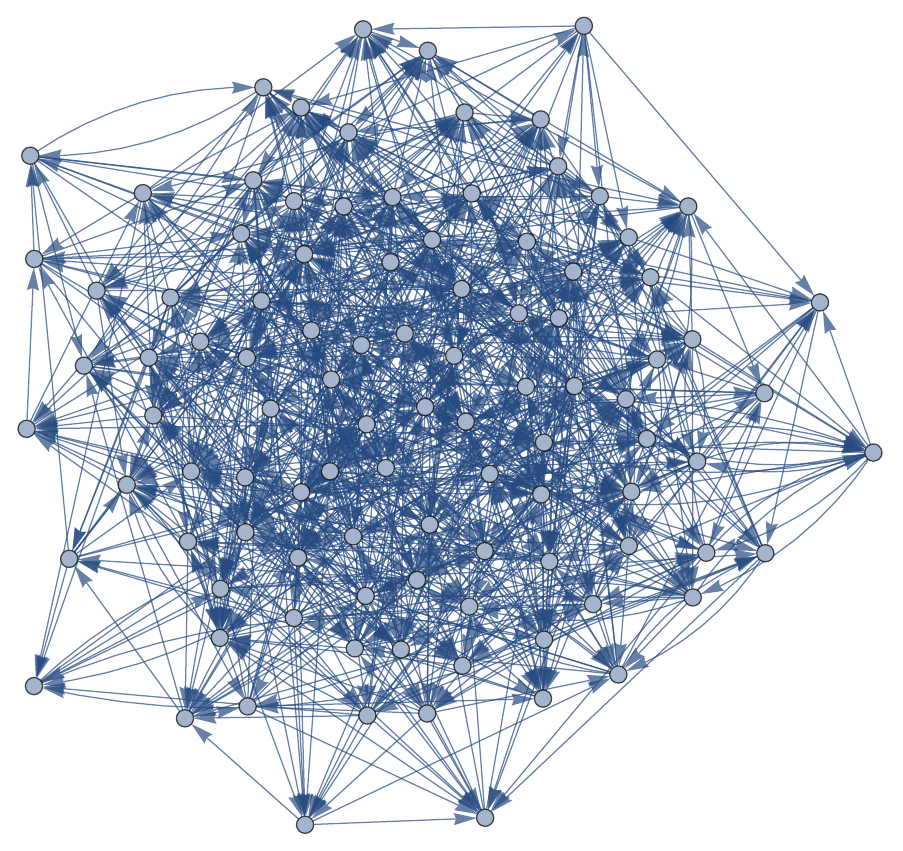} &
\includegraphics[width=5.5cm,height=4cm,keepaspectratio]{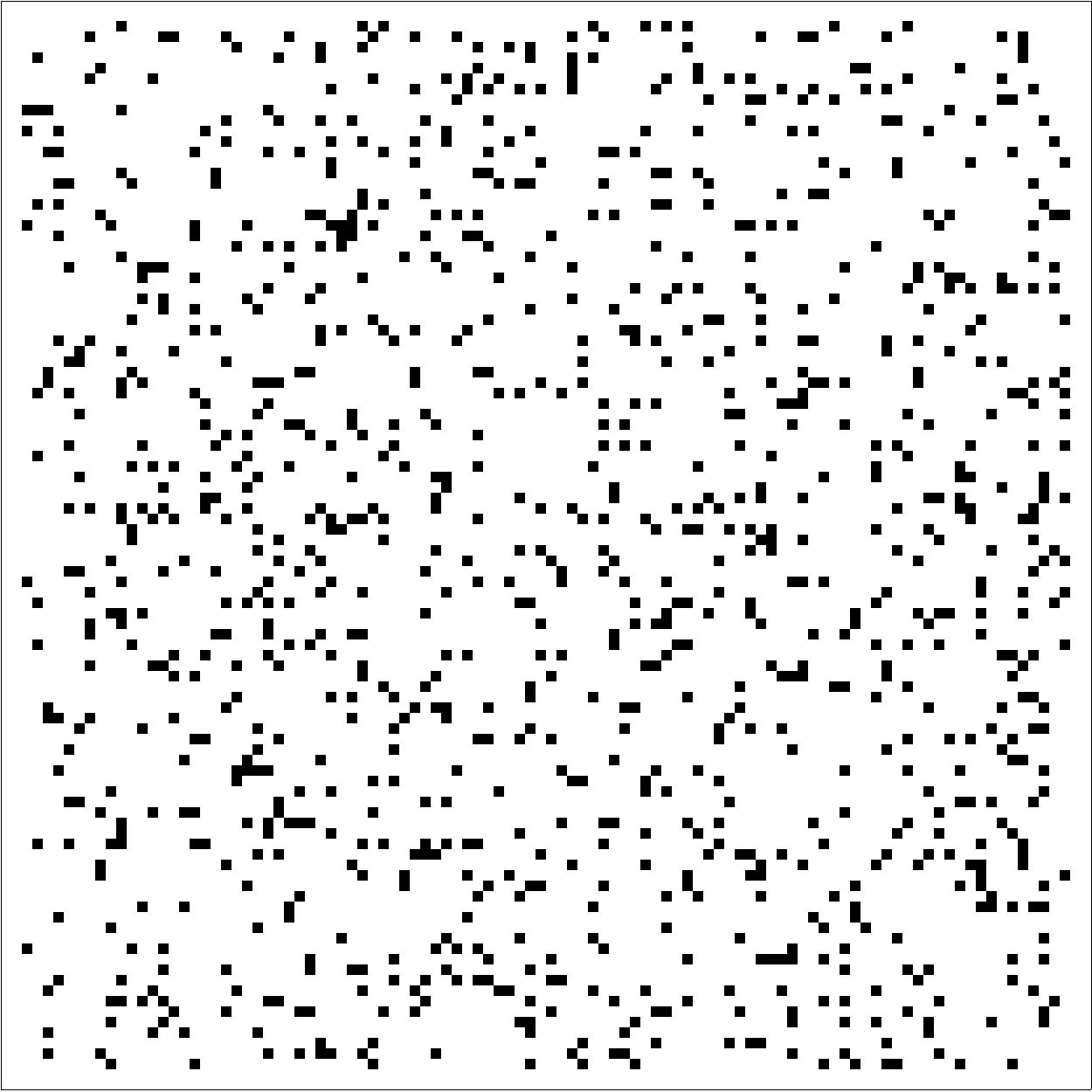}  &
\includegraphics[width=5.5cm,height=4cm,keepaspectratio]{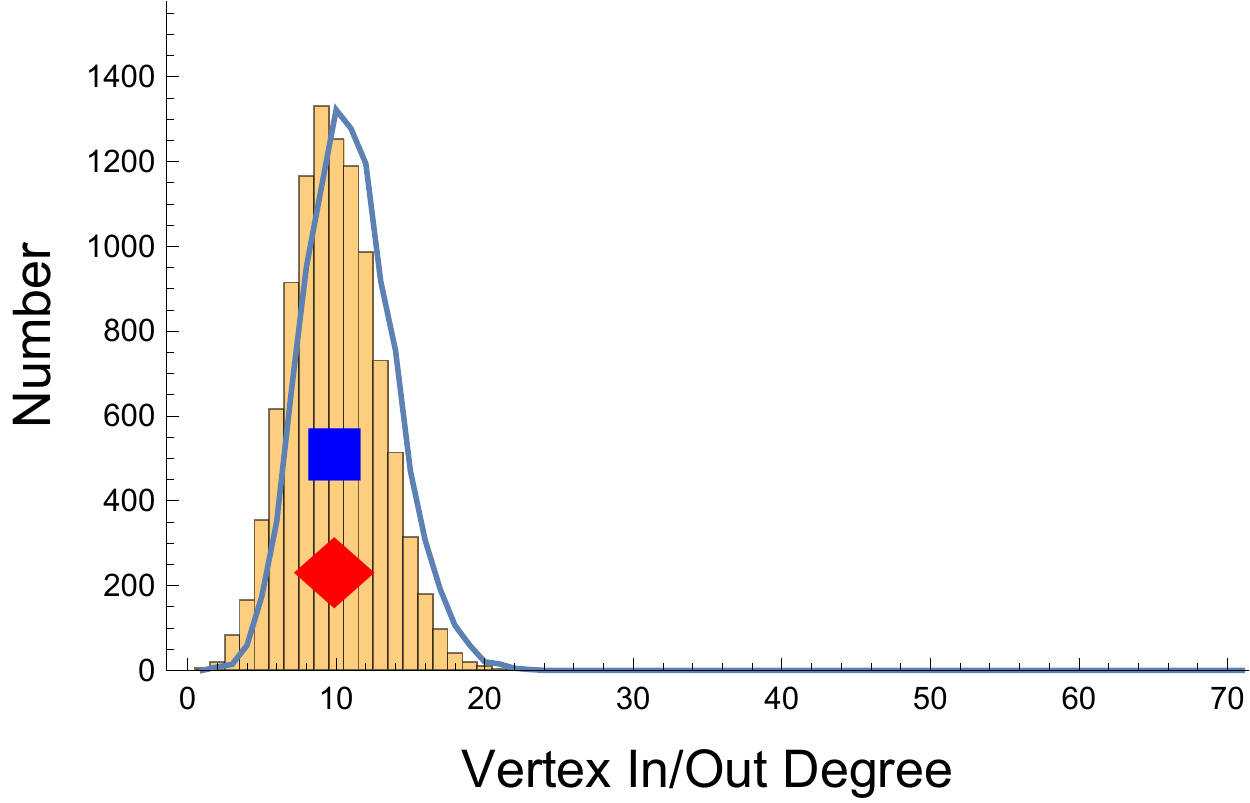}
\\

\hline
\includegraphics[width=5.5cm,height=4cm,keepaspectratio]{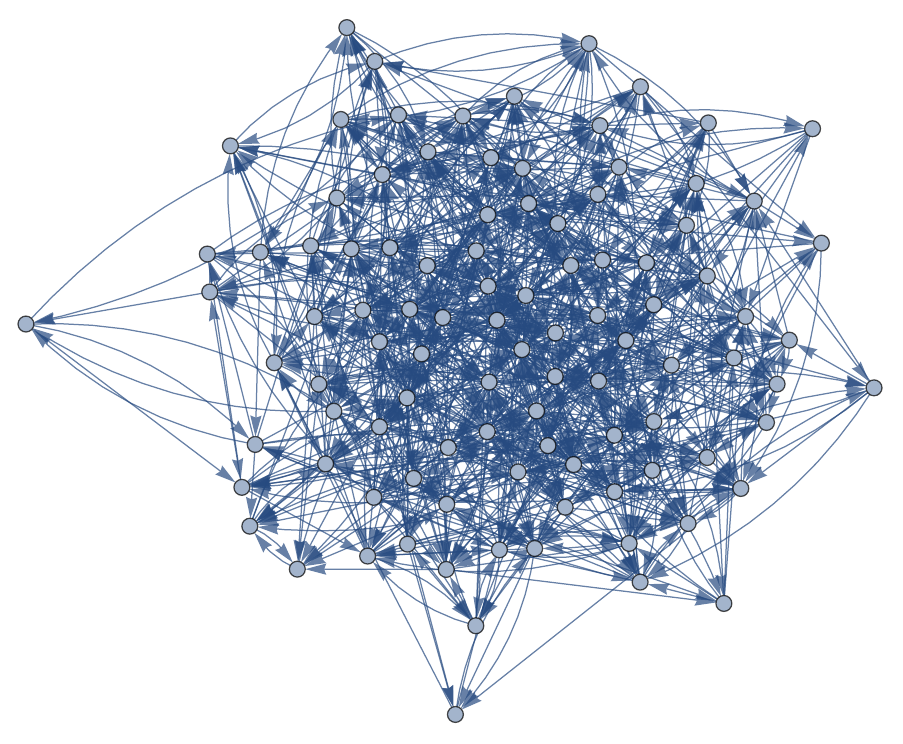} &
\includegraphics[width=5.5cm,height=4cm,keepaspectratio]{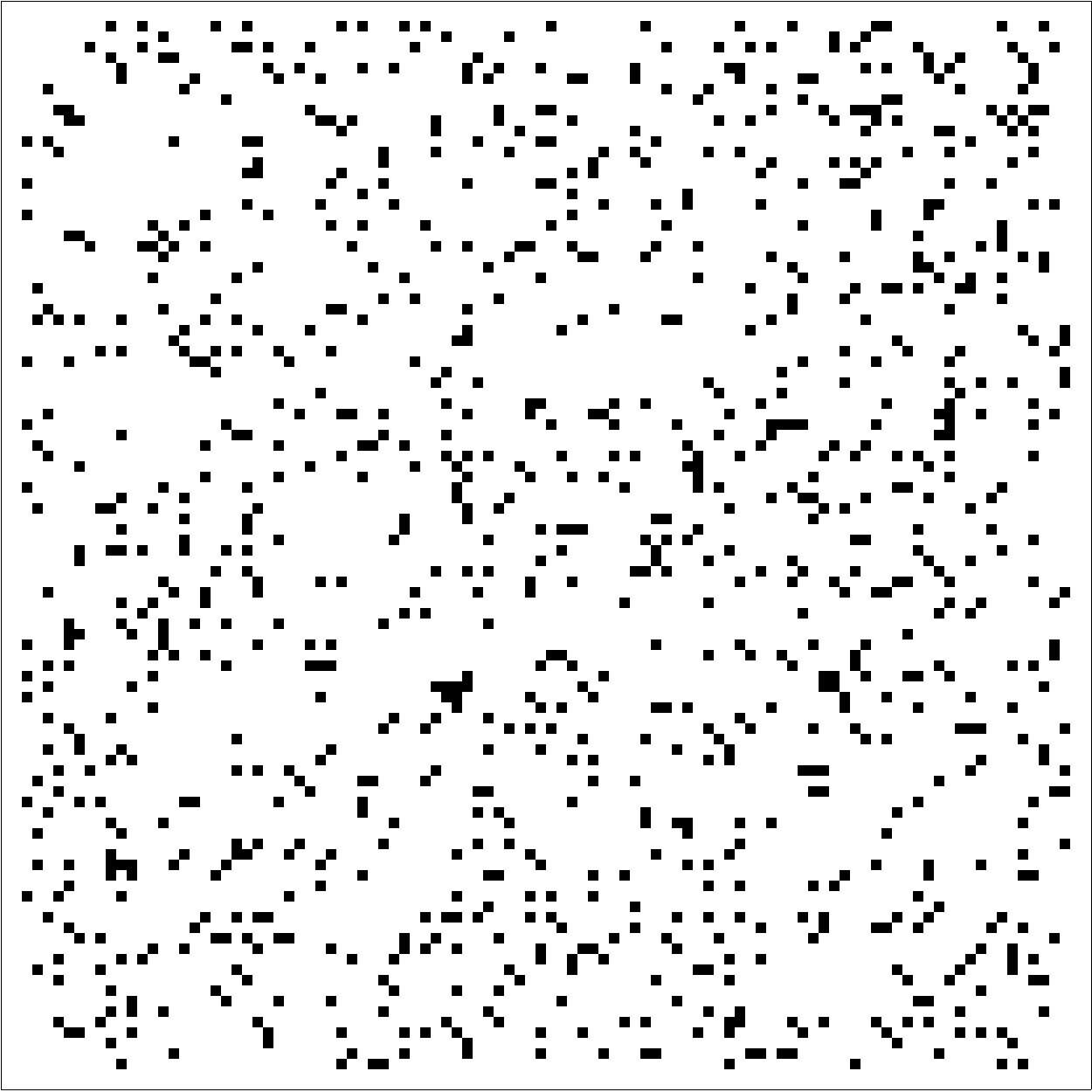} &
\includegraphics[width=5.5cm,height=4cm,keepaspectratio]{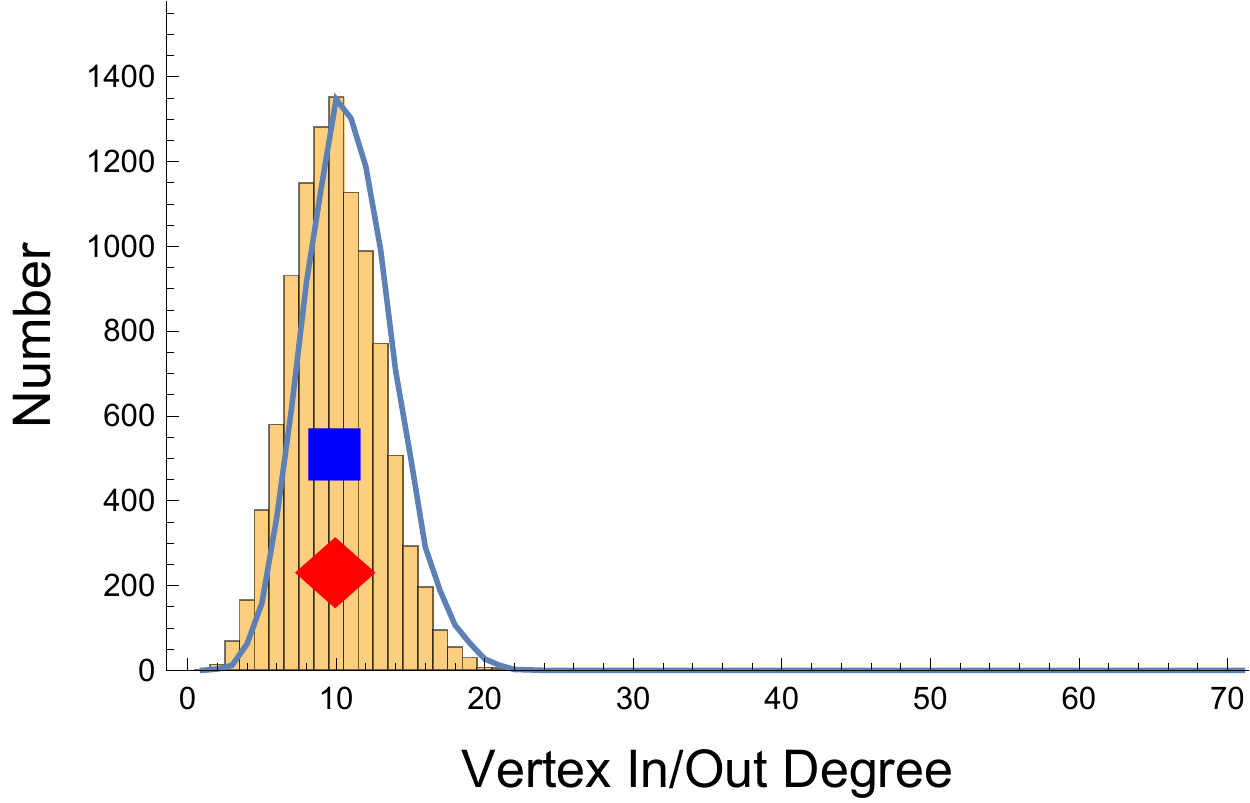}
\\

\hline
\includegraphics[width=5.5cm,height=4cm,keepaspectratio]{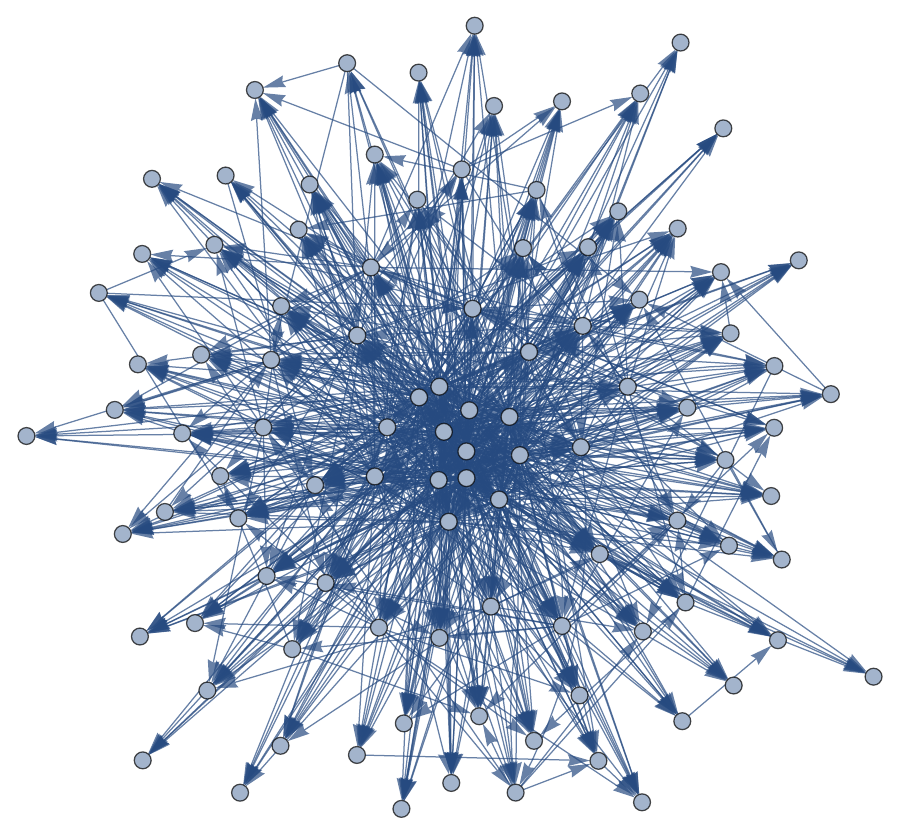} &
\includegraphics[width=5.5cm,height=4cm,keepaspectratio]{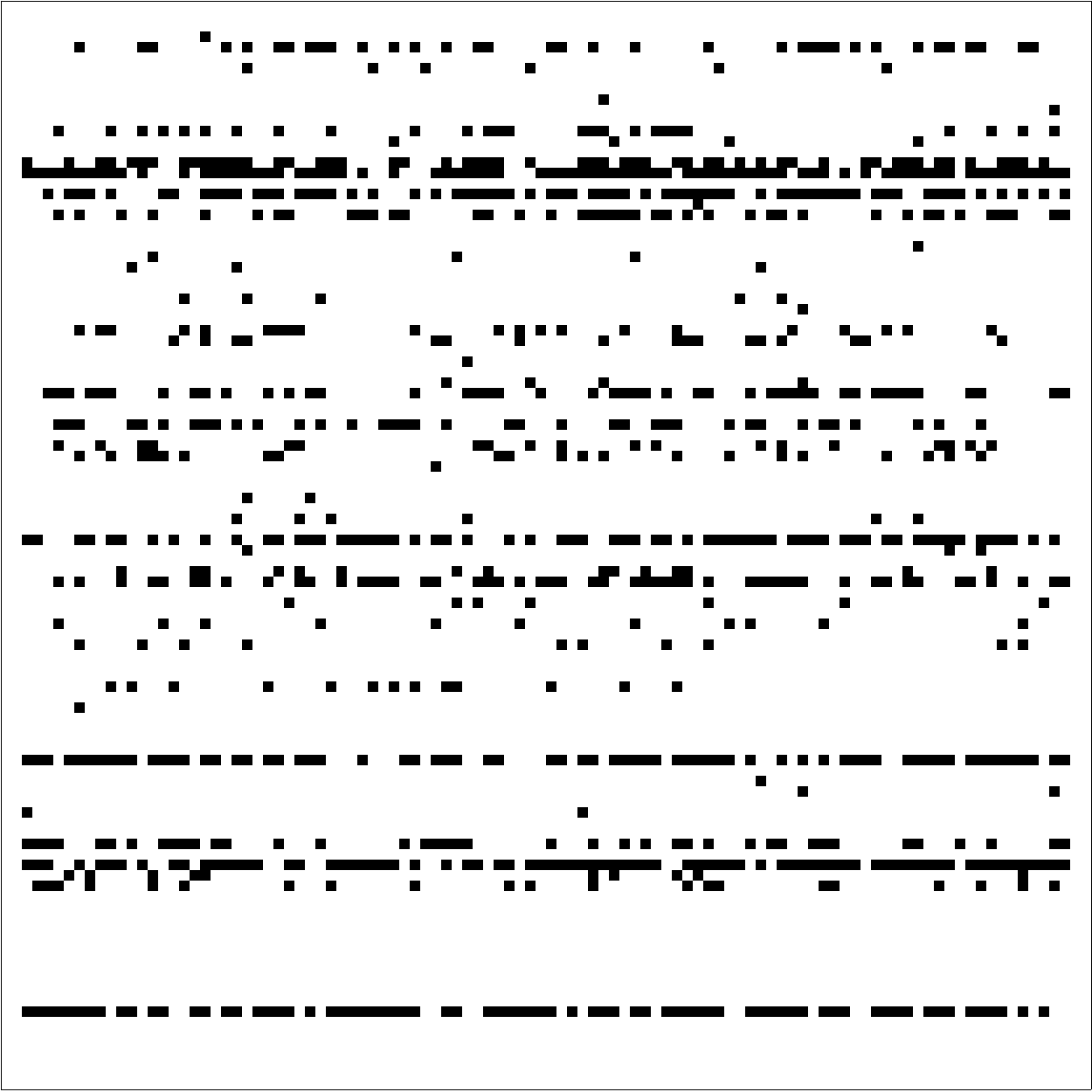} &
\includegraphics[width=5.5cm,height=4cm,keepaspectratio]{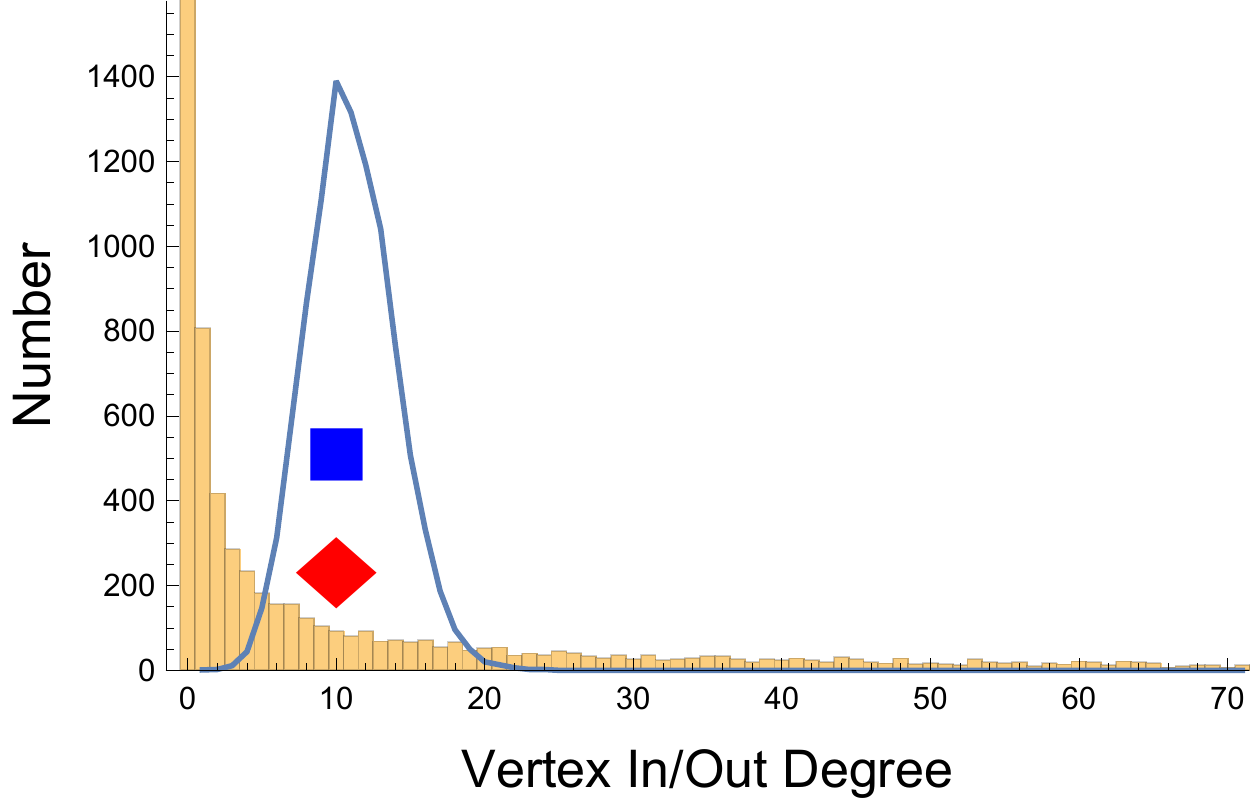}
\\

\hline
Centrality Graph & Pixel plot &
                                                              Distributions
                                                              of
                                                              vertex
                                                              in and
                     out-degree.
\\\hline
                                                          
\end{tabular}
\caption{Some numerical simulations with $N = 100$ vertices, $100$
  realizations of the random graph, and $\mathbb{P}(\omega \ge x) = 0.1$ . 
The three panels depict (left to right) a single realization of the
random graph, a pixel plot of the adjacency matrix for the graph of
that realization, and
histograms of the in-degree and out-degree of the vertices. The
parameter values are $(\alpha^{\text{recip}},\alpha^{\text{conv}}) =
(0,0)$,   $(\alpha^{\text{recip}},\alpha^{\text{conv}}) = (0.75,0)$ , $(\alpha^{\text{recip}},\alpha^{\text{conv}}) = (0,0.75)$. }
\label{fig:nykamp}
\end{figure}

\subsection{Other examples}

In addition to the Johnson scheme and the Nykamp-Zhao coherent
configuration there are many other situations where association
schemes or coherent configurations might arise in generating a random
network. The main assumption required for a coherent configuration,
assumption $(4)$ in Definition (1), is a strong homogeneity
assumption. In essence it requires that every pair satisfying a
particular relation in some sense looks like every other pair
satisfying that relation. 

One situation that might be of interest in, say, a
neuroscience context would be a network with two (or more) different
types of vertex. In the directed case one would have (with two types
of vertex) four different types of edge ($a\rightarrow a, a\rightarrow
b, b \rightarrow a, b \rightarrow b$) and numerous possible different
motifs.    

We work out the simplest possible such example here. There is a single
distinguished vertex (type $a$) and $N$ undistinguished vertices
(type $b$). Edges are undirected, and therefore of two types: those between two undistinguished vertices (undistinguished edges) and those between an undistinguished vertex and the distinguished one (distinguished edges). This leads to a (non-homogeneous) coherent configuration with nine relations. 
\begin{itemize}
\item $\R_I^{11}$ -- the identity relation for undistiguished edges. 
\item $\R_A^{11}$ -- the relation for adjacent undistinguished edges. 
\item $\R_D^{11}$ -- the relation for disjoint undistinguished edges. 
\item $\R_A^{12}$ -- two adjacent edges, the first of which is undistinguished, the second distinguished.
\item $\R_A^{21} = (\R_A^{12})^\dagger$ -- two adjacent edges, the first of which is distinguished, the second undistinguished.
\item $\R^{12}_D$ --  two disjoint edges, the first of which is undistinguished, the second distinguished.
\item $\R^{21}_D = (\R^{12}_D)^\dagger$ --  two disjoint edges, the first of which is distinguished, the second undistinguished.
\item $\R_I^{22}$ -- the identity relation for distinguished edges. 
\item $\R_A^{22}$ -- two adjacent edges, both of which are distinguished. 
\end{itemize}
The notations here are that the subscripts $I,A,D$ denote identity, adjacent and disjoint relations, meaning the edges share two, one or zero vertices. The superscripts $ij$ indicate whether the the first edge ($i$) and the second edge ($j$) are undistinguished ($1$) or distinguished ($2$). Note that this is a non-homogeneous coherent configuration, as $\R_I^{11}$ and $\R_I^{22}$ provide a partition of the diagonal. There are in principle $9^3 = 729$ structure constants, but these are very sparse due to the block structure of the relations. In particular it is easy to see that the following rules apply 
\begin{itemize}
\item $(\R_{x}^{ij})^\dagger = \R_x^{ji}$
\item $\R_x^{ij} \R_y^{kl} = 0$ if $j \neq k$
\item $\R_x^{ij} \R_y^{jk}= \sum_z \R_z^{ik}$  
\end{itemize}
where $x,y,z \in \{I,A,D\}$. There are five symmetric relations, and four transpose-conjugate pairs, so the covariance matrix has seven parameters.  The remaining quadratic relations satisfied by the algebra are given below, and define all of the non-zero intersection numbers: 
\begin{align*}
&\R_I^{11} \cdot \R_x^{ik} = \R_x^{1k} &\\
&\R_I^{11} \cdot \R_x^{2k} = \0 &\\
&\R_I^{22} \cdot \R_x^{ik} = \0&\\
&\R_I^{22} \cdot \R_x^{2k} = \R_x^{2k} &\\
&\R_A^{11}\cdot \R_A^{11} = 2 (N-2) \R_I^{11}+ (N-2) \R_A^{11} + 4
  \R_D^{11}& \\
& \R_A^{11} \cdot \R_D^{11} = (N-3) \R_A^{11} + 2 (N-4) \R_D^{11} &\\
 & \R_A^{11}\cdot \R_A^{12} = (N-2) \R_A^{12} + 2 \R_D^{12} &\\ 
&\R_A^{11} \cdot \R_D^{12} = (N-2) \R_A^{12} + 2 (N-3) \R_D^{12} & \\
& \R_D^{11} \cdot \R_A^{11} = (N-3) \R_A^{11} + 2 (N-4) \R_D^{11} & \\
& \R_D^{11} \cdot \R_A^{12} = (N-3) \R_D^{12} & \\
& \R_D^{11} \cdot \R_D^{12} = {{N-2}\choose{2}} \R_A^{12} + {{N-3}\choose{2}}
  \R_D^{12} & \\
&\R_A^{12} \cdot \R_A^{21} = 2 \R_I^{11} + \R_A^{11} &\\
&\R_A^{12} \cdot \R_D^{21} = \R_A^{11} + 2 \R_D^{11} & \\
&\R_A^{12} \cdot \R_A^{22} = \R_A^{12} + 2 \R_D^{12} & \\
&\R_D^{12} \cdot \R_A^{21} = \R_A^{11} + 2 \R_D^{11} & \\
&\R_D^{12} \cdot \R_D^{21} = (N-2) \R_I^{11} + (N-3) \R_A^{11} + (N-4)
 \R_D^{11} & \\
& \R_D^{12} \cdot \R_A^{22} = (N-2) \R_A^{12} + (N-3) \R_D^{12} .&
\end{align*}
This coherent configuration contains the Johnson scheme $J(N,2)$ as a subscheme. Since this scheme has nine relations any covariance matrix built up from this scheme will have, at most, nine distinct eigenvalues. In fact one can compute the eigenvalues using the $9 \times 9$ intersection algebra to find that there are only five distinct eigenvalues. We do not list them here but note that, similar to the Nykamp-Zhao scheme, the eigenvalues are linear or algebraic of degree two.

We generate some second order networks, again with $N=100$
vertices, for the model of a complete graph with a single
distinguished vertex. In previous cases there was a single identity element, the
coefficient of which could always be scaled to $\alpha_I=1$. In this
case since there are two types of edge the identity partitions into
two parts, which can in principle have different weights. We have chosen $\alpha_I^{22}$ (associated with
distinguished edges) to always be equal to $1$, and chosen the
threshold so that the probability of distinguished edges is $p=.1$.
The coefficient of the undistinguished edges $\alpha_I^{11}$ is chosen
to be either $\alpha_I^{11}=1$, giving a single undistinguished edge probability of
$.1$, or $ \alpha_I^{11}=2$ giving a single edge probability of about
$.183$.  In these experiments we always pick
$\alpha_A^{12}=\alpha_A^{21}=\alpha_A^{22}=0$, so there are no
correlations between a pair of adjacent distinguished edges, or
between an adjacent distinguished and undistinguished edge. We enforce
the mean zero conditions 
\begin{eqnarray*}
\alpha_I^{11} + 2 (N-2) \alpha_A^{11} + \binom{N-2}{2} \alpha_D^{11} +
  (N-2) \alpha_D^{12} = 0 \\
(N-1) \alpha_A^{12} + \binom {N-2}{2} \alpha_D^{12} + \alpha_I^{22} +
  (N-1) \alpha_D^{22} = 0.
\end{eqnarray*}  

Figure
\ref{fig:special} presents the results of some numerical
experiments. The left-most panel depicts a single realization of the
random graph with the undistinguished vertices arranged around a
circle with the distinguished vertex at the center of the circle. The second panel represents the same graph with the vertices
arranged by centrality, with vertices of higher degree closer to the
center.  The final panel gives a histogram of the
vertex degrees of the undistinguished vertices together with a square
(blue online) denoting the sample mean degree of the distinguished
vertex and a diamond (red
online) depicting the sample mean degree of the undistinguished
vertices. 
The first row depicts the case $\alpha_I^{11}=1$ and
$\alpha_I^{22}=1$. In
this case we see a roughly normal distribution of the vertex degrees
around the expected value, $9.9$. If we increase the variance of the
undistinguished edges (keeping the same threshold for all edges) we of
course see the distribution of the degrees of the undistiguished
vertices shift to the right, with no real change in shape. The sample mean degree of the distinguished vertex
remains the same, however.  Next we increase $\alpha_A^{11}$, meaning that we have
positive correlations between undistinguished edges that are incident
to the same vertex (necessarily undistinguished). Here again we see a
distinct broadening of the distribution of the undistinguished vertex degrees, with a
higher probability of having vertices of high and of low degree. One
can see this reflected in the second graph: as compared with the graph
above it there is a denser ``core'' of strongly connected vertices
together with a ``halo'' of weakly connected vertices.  Note that this
change in the degree distribution is very difficult to see in the
circular imbedding of the graph.

\begin{figure}[h]
\centering
\begin{tabular}{|c|c|c|}
\hline
\includegraphics[width=5cm,height=4cm,keepaspectratio]{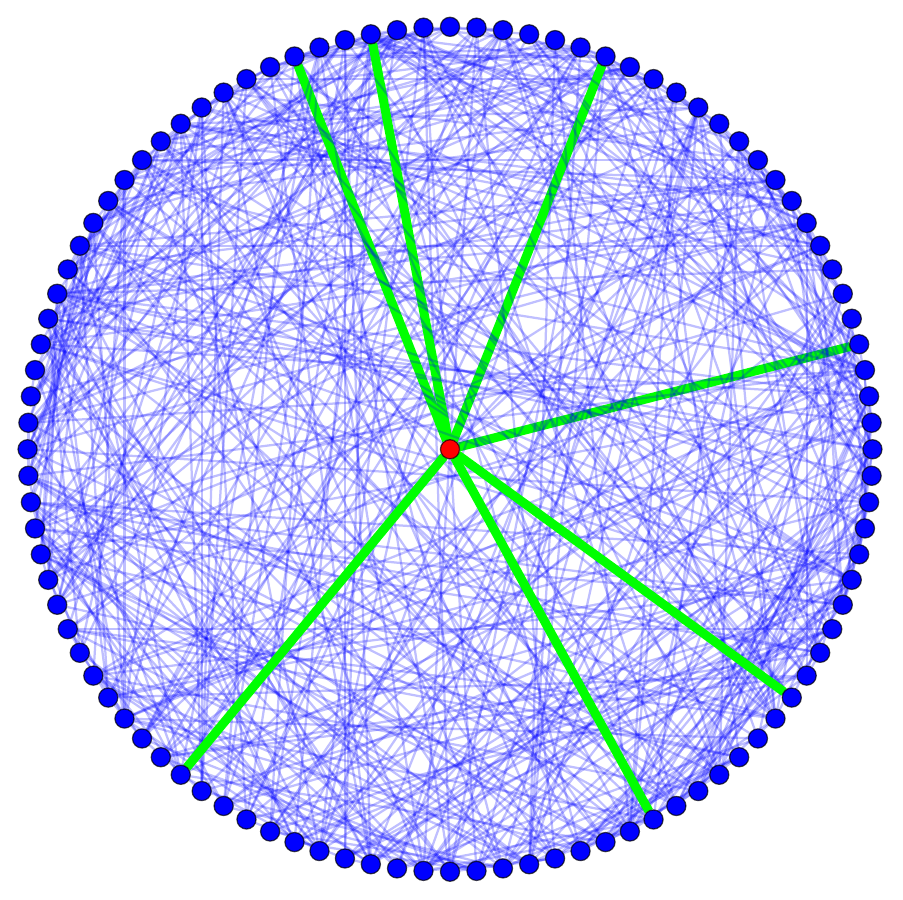} &
\includegraphics[width=5cm,height=4cm,keepaspectratio]{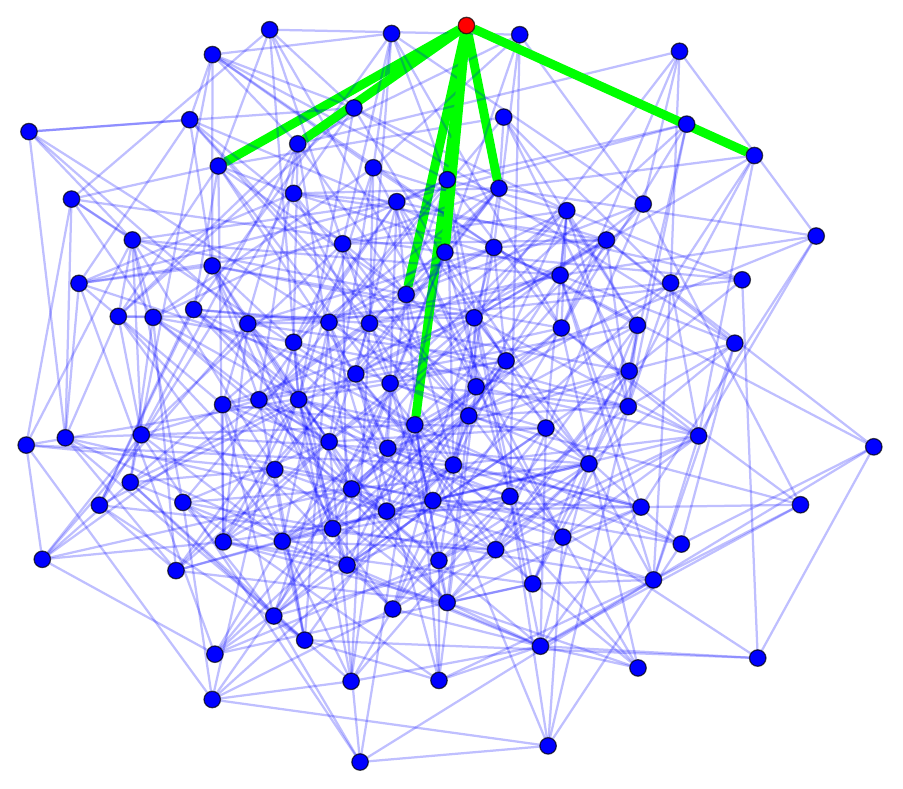} &
\includegraphics[width=5cm,height=4cm,keepaspectratio]{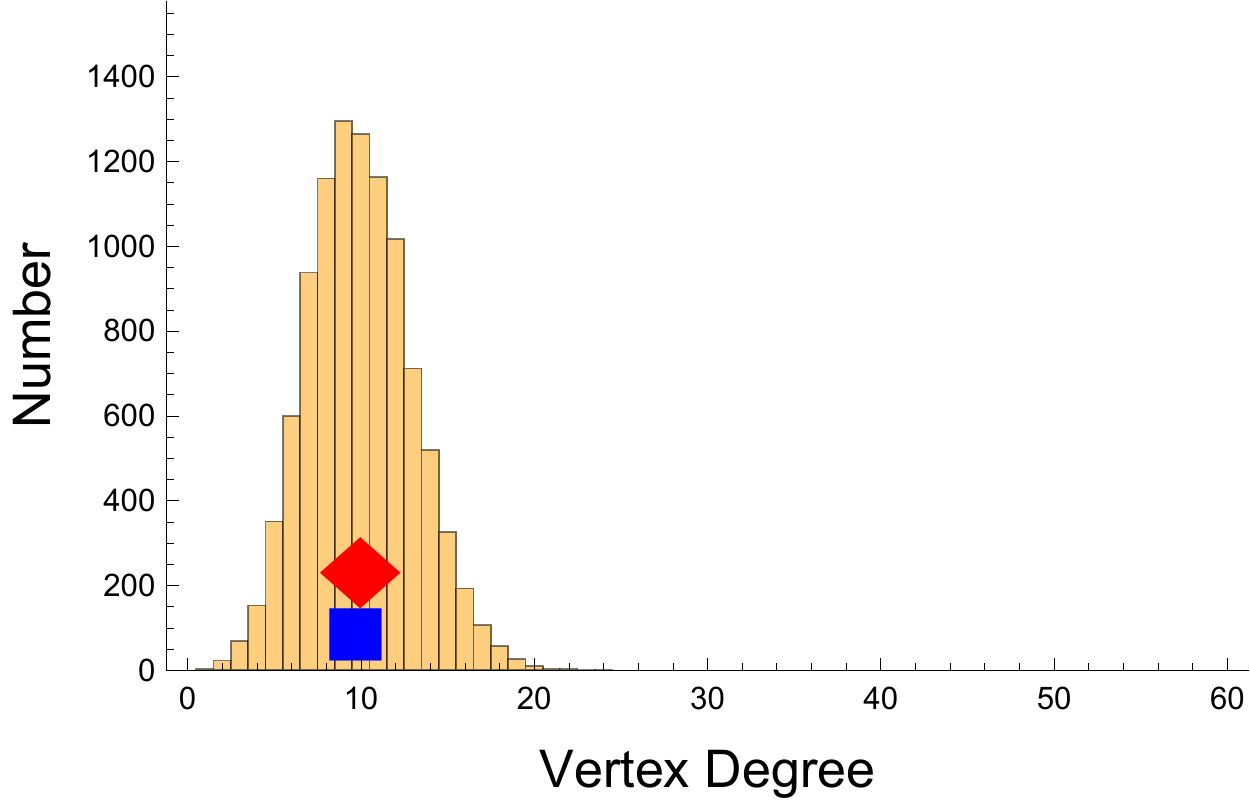}
\\
\hline
\includegraphics[width=5cm,height=4cm,keepaspectratio]{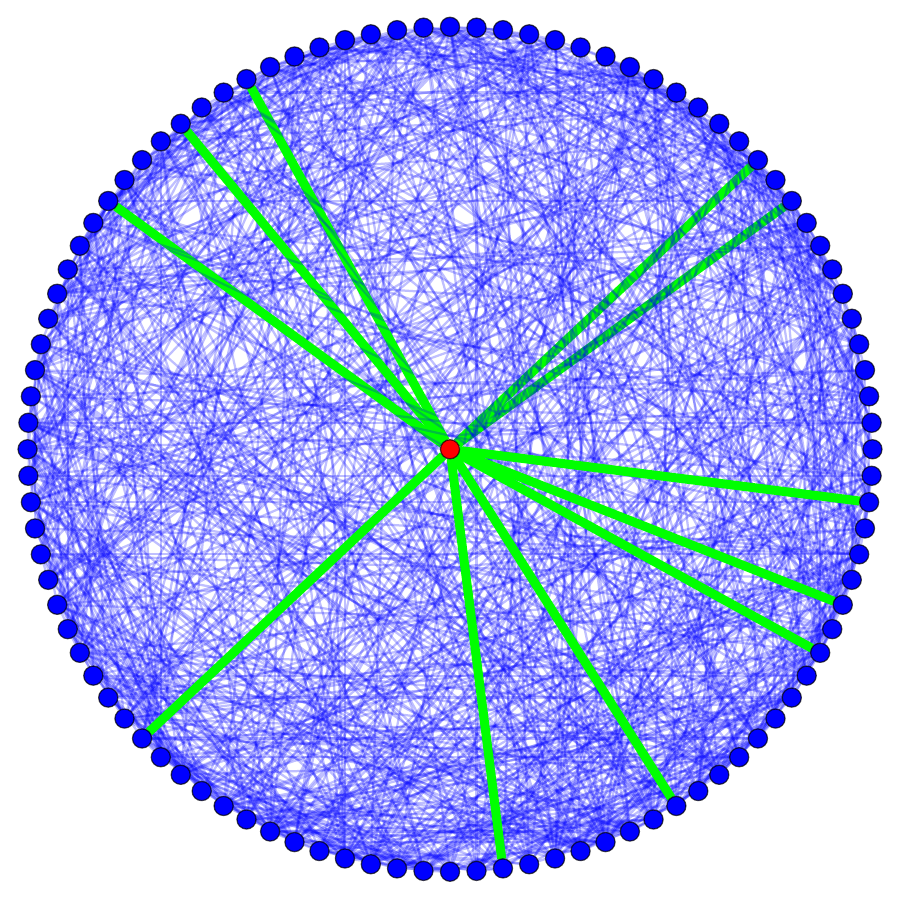} &
\includegraphics[width=5cm,height=4cm,keepaspectratio]{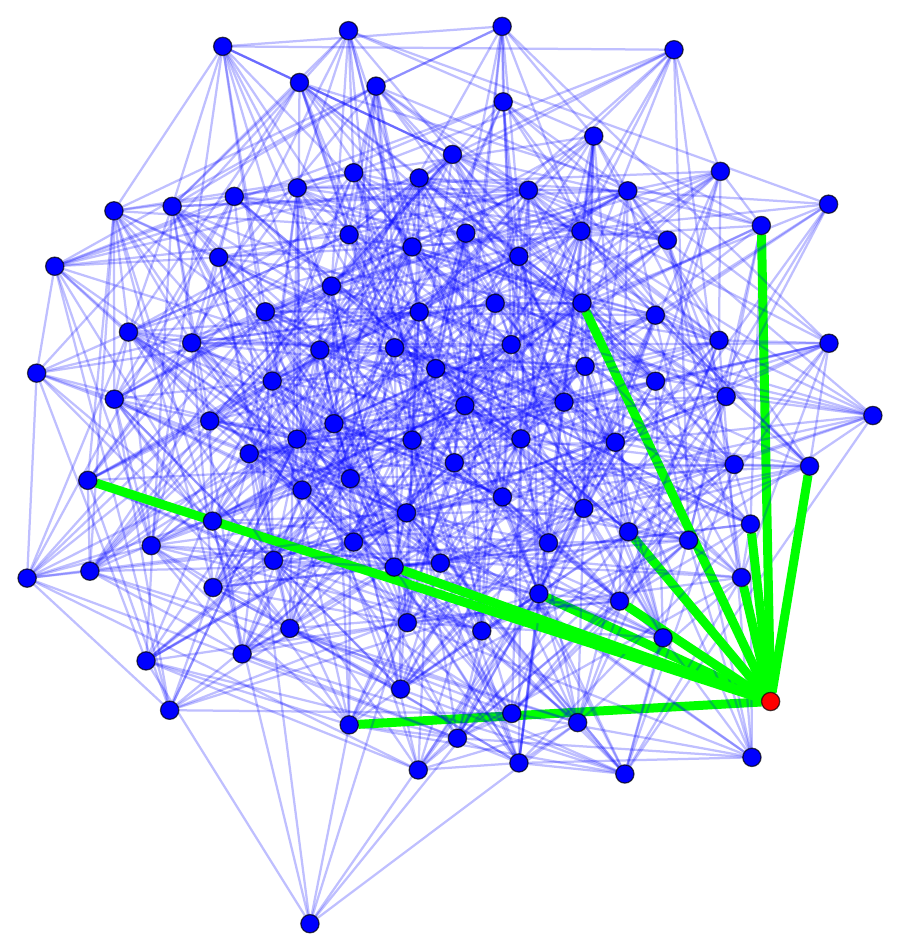} &
\includegraphics[width=5cm,height=4cm,keepaspectratio]{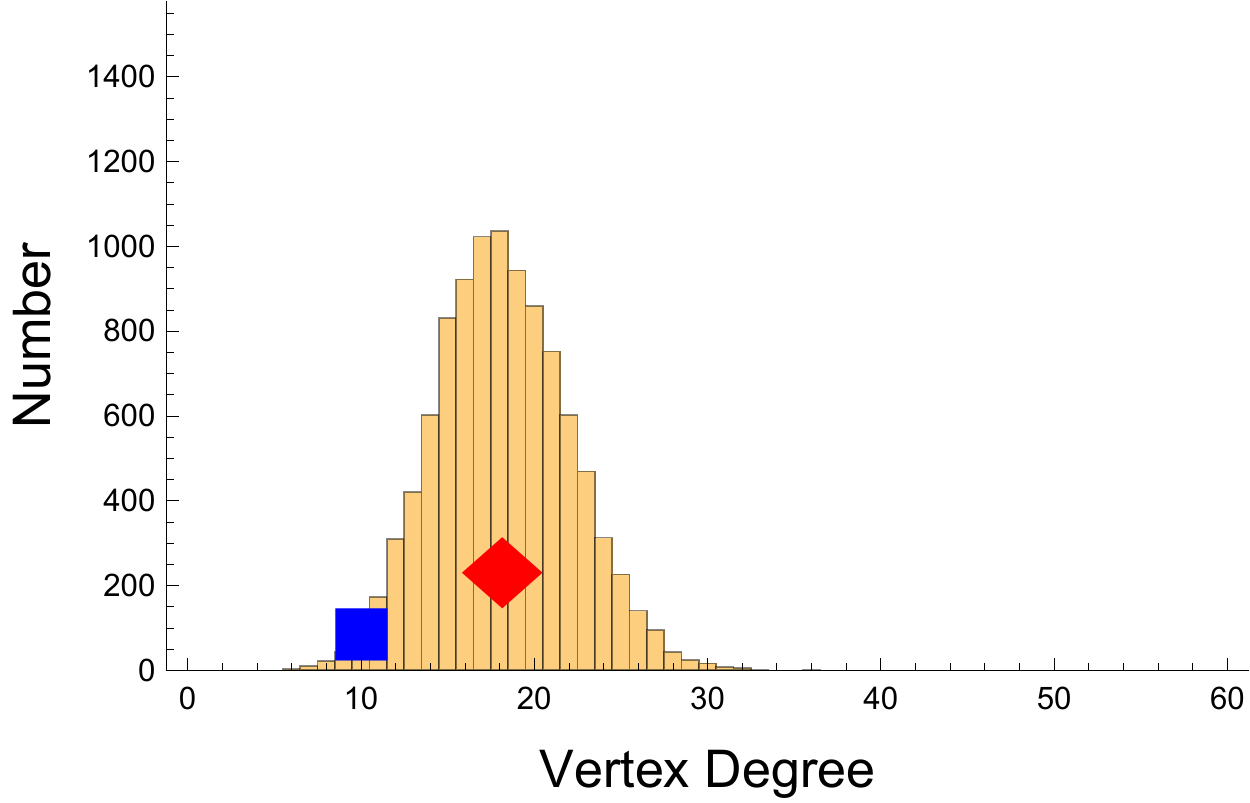}
\\
\hline
\includegraphics[width=5cm,height=4cm,keepaspectratio]{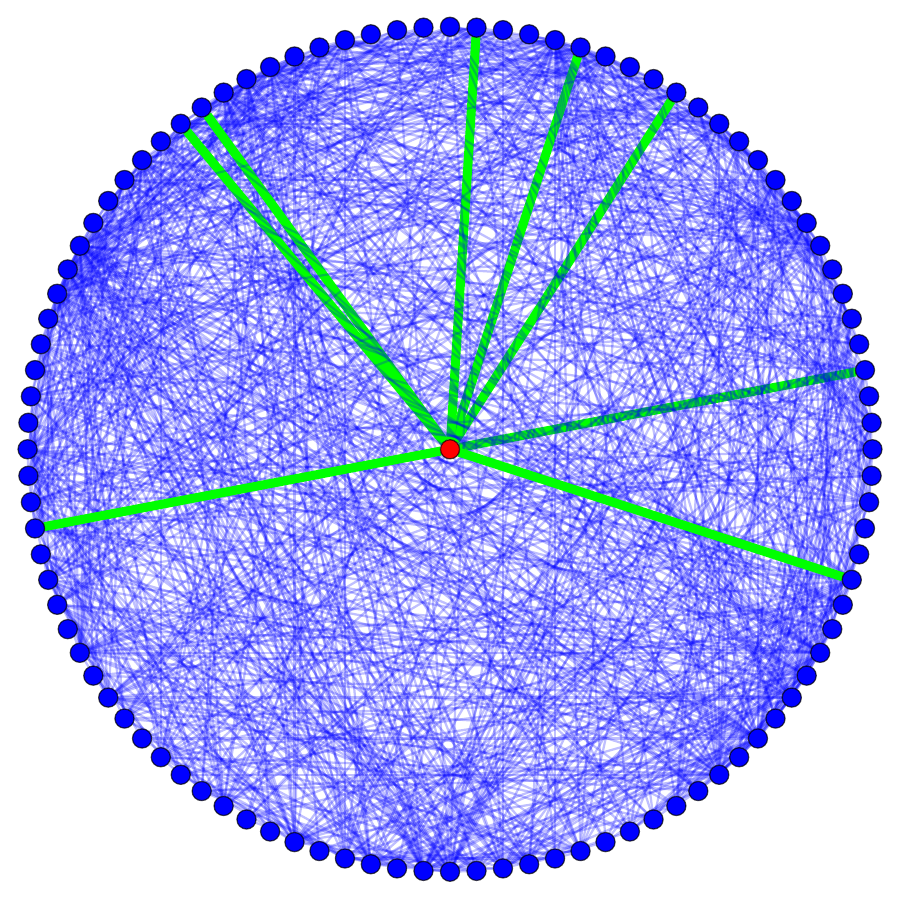} &
\includegraphics[width=5cm,height=4cm,keepaspectratio]{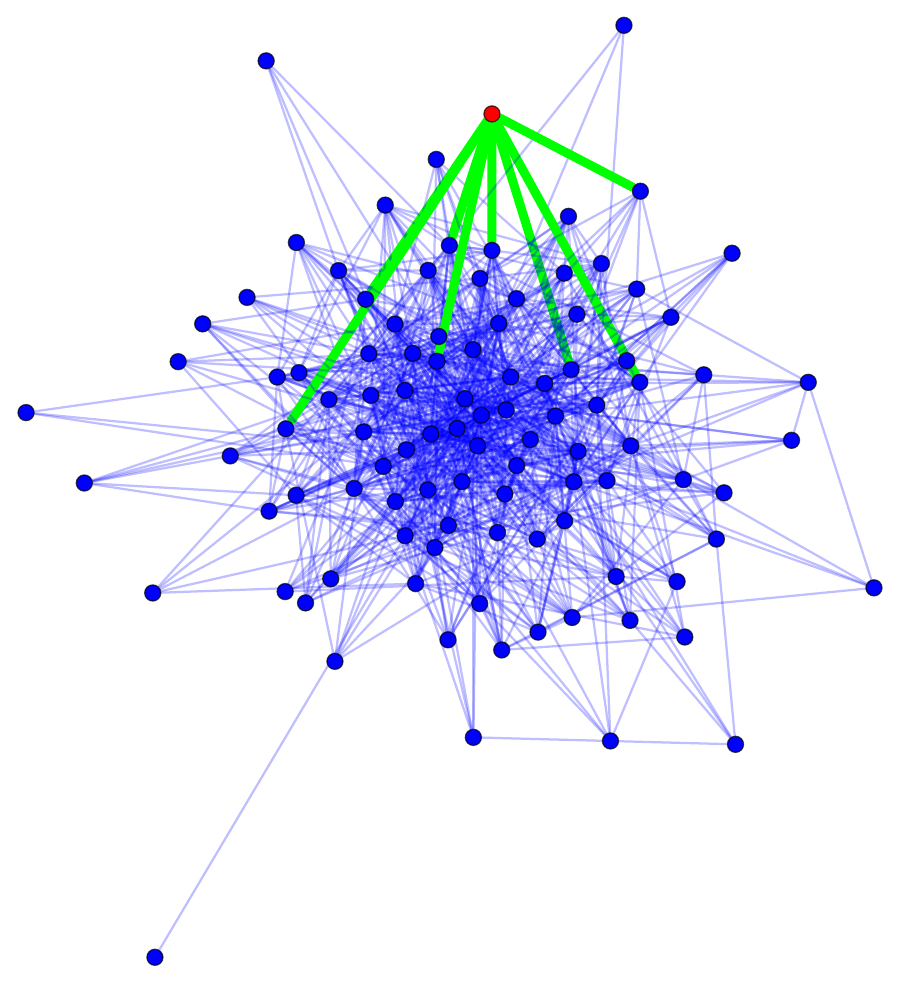} &
\includegraphics[width=5cm,height=4cm,keepaspectratio]{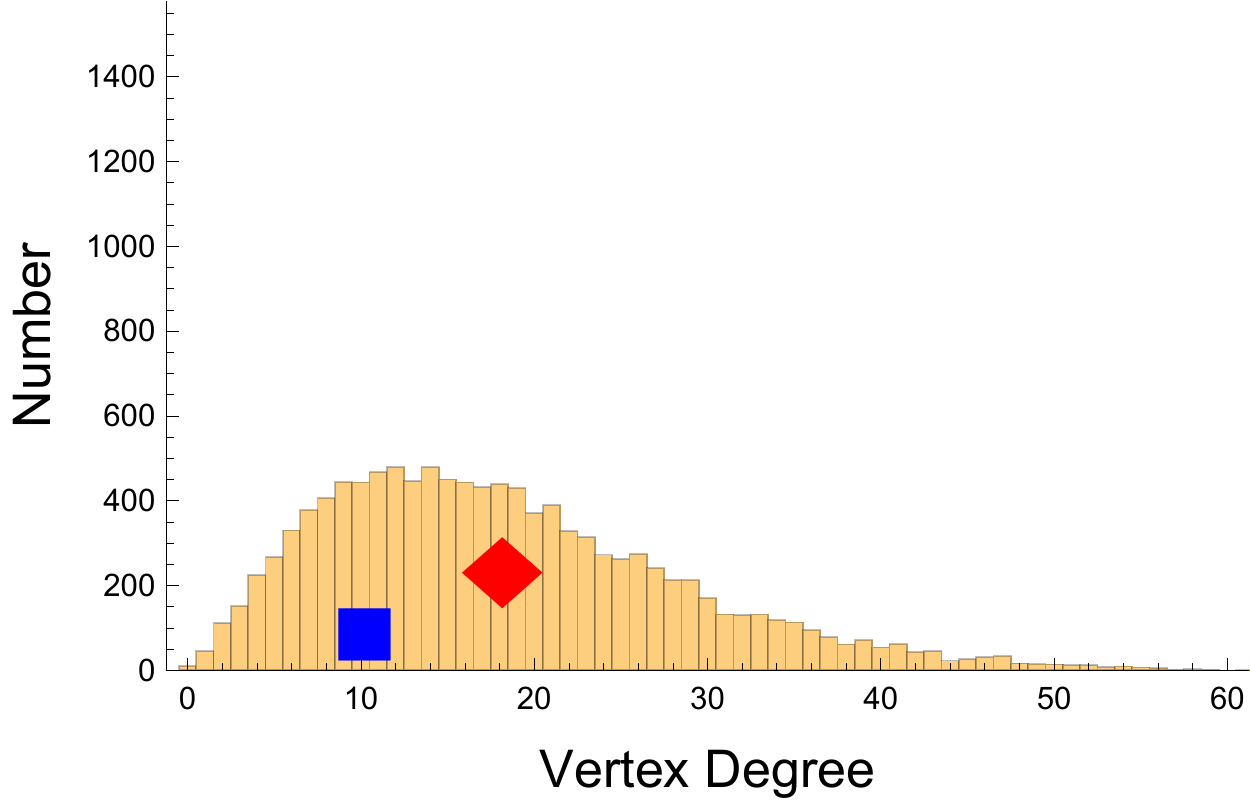}
\\
\hline
Circular Graph & Centrality Graph   &
                                                                     Histogram
                                                                     of
                                                                     Vertex Degrees
\\
\hline
\end{tabular}
\caption{Some numerical experiments for the second order network based
  on a complete graph with one distinguished vertex. The left and
  center panels give different presentations of the same graph. The
  final panel gives a histogram of vertex degrees. }
\label{fig:special}
\end{figure}

This example can itself be generalized in a number of ways. One could consider the case where there are $N$ vertices of one type and $M$ vertices of a second type. In this case there are three types of edge and twenty relations, ten symmetric relations and ten relations that are conjugate-transposes of one another, leading to a covariance matrix depending on fifteen parameters. As in the previous case the actual number of distinct eigenvalues is somewhat less than is guaranteed by the theorem: the theorem guarantees that there are at most twenty distinct eigenvalues, but numerical computations suggest that there are, in fact,  only ten distinct eigenvalues. We have not computed all of the intersection numbers for this case, but it would be relatively straightforward to do so using a symbolic manipulator such as Mathematica. Similarly one could consider a directed analog of the above. 

Another example would be an extension of the SONETS algorithm to generating random subgraphs of a graph other than the complete graph -- say the Johnson $J(N,2)$  graph. In the undirected SONETS based on the complete graph there are only three relations, the identity relation, the adjacent relation, and the disjoint relation. This corresponds to the fact that that line graph of the complete graph (the Johnson graph) is a distance regular graph of diameter two. The line graph of the Johnson graph is {\bf not} a regular graph, but it does have an underlying homogeneous coherent configuration, which can be described as follows. The vertices of the Johnson graph $J(N,2)$ are indexed by two element subsets of $\{1\ldots N\}$. Vertices are connected by an edge if they intersect in one element, so  it is natural to index the edges by a two element subset and a disjoint one element subset, $\{\{i,j\},\{k\}\}$, with $i,j,k$ all distinct, representing the edge from vertex $\{i,k\}$ to vertex $\{j,k\}$.   Given two edges  $\{\{i,j\},\{k\}\}$ and $\{\{i',j'\},\{k'\}\}$ there are a total of twelve relations which are most naturally indexed by a $4$-tuple $(p,q,r,s)$ defined as follows. 
\begin{align*}
p &= |\{i,j\} \cap \{i',j'\}|& \\
q &= |\{k\} \cap \{k'\} | &\\
r &= |\{k\} \cap \{i',j'\}| &\\
s &=   |\{k'\} \cap \{i,j\}|.& 
\end{align*}
Clearly $p\in \{0,1,2\}$ and $q,r,s \in \{0,1\}$, however we have the additional constraint that $r$ and $s$ must be zero if either $p$ or $q$ takes the maximal value ($p=2$ and $q=1$ respectively.) This leads to a homogeneous coherent configuration with twelve relations. One could use this to generate, via the SONETS algorithm, a random subgraph of the Johnson graph in which edges in various relations are correlated.

\section{Conclusions and Future Directions}

We have considered the problem of generating second order networks
random networks -- networks in which the edges are not statistically
independent but rather different two-element
subgraphs (motifs) occur with different probabilities. We show that
the algorithm proposed by Nykamp, Zhao and collaborators for
generating such networks is intimately connected with a certain family
of coherent configurations, and that the underlying algebraic
structure of the coherent configuration, namely the existence of a
fixed dimensional representation of the algebra (the intersection
algebra) makes many of the linear algebraic computations simple to
carry out.  Further we have shown that it is possible to generalize
this structure to generate many new types of random networks. While it
is straightforward to write down (in terms of multidimensional error
type integrals) the probability of having any particular two or,
indeed, $k$ edge motif it is not clear what the large $N$ limiting
distribution of (for instance) quantities like the vertex degree
distribution might be. It would be interesting, though potentially
difficult, to derive asymptotics for the large $N$ limit of quantities
such as the vertex degree distribution. 

{\bf Acknowledgements:} The authors would like to acknowledge support
from the National Science Foundation under Grant DMS-1615418. 

\bibliography{Sonets}

\begin{thebibliography}{10}

\bibitem{Aiello.2000}
William Aiello, Fan Chung, and Linyuan Lu.
\newblock A random graph model for massive graphs.
\newblock In {\em Proceedings of the Thirty-second Annual ACM Symposium on
  Theory of Computing}, STOC '00, pages 171--180, New York, NY, USA, 2000. ACM.

\bibitem{Albert.Barabasi.2002}
R\'eka Albert and Albert-L\'aszl\'o Barab\'asi.
\newblock Statistical mechanics of complex networks.
\newblock {\em Rev. Modern Phys.}, 74(1):47--97, 2002.

\bibitem{Bailey.Book}
R.~A. Bailey.
\newblock {\em Association schemes}, volume~84 of {\em Cambridge Studies in
  Advanced Mathematics}.
\newblock Cambridge University Press, Cambridge, 2004.
\newblock Designed experiments, algebra and combinatorics.

\bibitem{Bannai.Ito}
Eiichi Bannai and Tatsuro Ito.
\newblock {\em Algebraic combinatorics. {I}}.
\newblock The Benjamin/Cummings Publishing Co., Inc., Menlo Park, CA, 1984.
\newblock Association schemes.

\bibitem{Barabasi.2009}
Albert-L{\'a}szl{\'o} Barab{\'a}si.
\newblock Scale-free networks: A decade and beyond.
\newblock {\em Science}, 325(5939):412--413, 2009.

\bibitem{Barabasi.Albert.1999}
Albert-L{\'a}szl{\'o} Barab{\'a}si and R{\'e}ka Albert.
\newblock Emergence of scaling in random networks.
\newblock {\em Science}, 286(5439):509--512, 1999.

\bibitem{Porter.Mucha.2013}
Danielle~S. Bassett, Mason~A. Porter, Nicholas~F. Wymbs, Scott~T. Grafton,
  Jean~M. Carlson, and Peter~J. Mucha.
\newblock Robust detection of dynamic community structure in networks.
\newblock {\em Chaos: An Interdisciplinary Journal of Nonlinear Science},
  23(1):013142, 2018/07/18 2013.

\bibitem{Batagelj.2005}
Vladimir Batagelj and Ulrik Brandes.
\newblock Efficient generation of large random networks.
\newblock {\em Phys. Rev. E}, 71:036113, Mar 2005.

\bibitem{Diaconis.2011}
Joseph Blitzstein and Persi Diaconis.
\newblock A sequential importance sampling algorithm for generating random
  graphs with prescribed degrees.
\newblock {\em Internet Mathematics}, 6(4):489--522, 03 2011.

\bibitem{cameron2003coherent}
Peter~J Cameron.
\newblock Coherent configurations, association schemes and permutation groups.
\newblock In {\em Groups, Combinatorics And Geometry: DURHAM 2001}, pages
  55--71. World Scientific, 2003.

\bibitem{Chung.2004}
Fan Chung, Linyuan Lu, and Van Vu.
\newblock The spectra of random graphs with given expected degrees.
\newblock {\em Internet Mathematics}, 1(3):257--275, 2004.

\bibitem{delGenio2010}
Charo~I. Del~Genio, Hyunju Kim, Zolt\'{a}n Toroczkai, and Kevin~E. Bassler.
\newblock {Efficient and Exact Sampling of Simple Graphs with Given Arbitrary
  Degree Sequence}.
\newblock {\em PLoS ONE}, 5(4):e10012+, April 2010.

\bibitem{Delsarte}
P.~Delsarte and V.~I. Levenshtein.
\newblock Association schemes and coding theory.
\newblock {\em IEEE Transactions on Information Theory}, 44(6):2477--2504,
  1998.

\bibitem{Fuller.2016}
Samantha Fuller.
\newblock Second order networks with spatial structure.
\newblock Master's thesis, 2016.

\bibitem{Hanaki.II}
A.~Hanaki and I.~Miyamoto.
\newblock Classification of association schemes with 16 and 17 vertices.
\newblock {\em Kyushu Journal of Mathematics}, 52(2):383--395, 1998.

\bibitem{Hanaki.I}
A~Hanaki and I~Miyamoto.
\newblock Classification of association schemes with 18 and 19 vertices.
\newblock {\em Korean Journal of Computational \& Applied Mathematics},
  5(3):543--551, 1998.

\bibitem{higman.1975}
Donald~G Higman.
\newblock Coherent configurations.
\newblock {\em Geometriae Dedicata}, 4(1):1--32, 1975.

\bibitem{Newman.2003}
S.~Itzkovitz, R.~Milo, N.~Kashtan, G.~Ziv, and U.~Alon.
\newblock Subgraphs in random networks.
\newblock {\em Phys. Rev. E}, 68:026127, Aug 2003.

\bibitem{Kashtan.2004}
N.~Kashtan, S.~Itzkovitz, R.~Milo, and U.~Alon.
\newblock Efficient sampling algorithm for estimating subgraph concentrations
  and detecting network motifs.
\newblock {\em Bioinformatics}, 20(11):1746--1758, 2004.

\bibitem{Milo.2002}
R.~Milo, S.~Shen-Orr, S.~Itzkovitz, N.~Kashtan, D.~Chklovskii, and U.~Alon.
\newblock Network motifs: Simple building blocks of complex networks.
\newblock {\em Science}, 298(5594):824--827, 2002.

\bibitem{Newman.PA.2001}
M.~E.~J. Newman.
\newblock Clustering and preferential attachment in growing networks.
\newblock {\em Phys. Rev. E}, 64:025102, Jul 2001.

\bibitem{Newman.Watts.1999}
M.E.J. Newman and D.J. Watts.
\newblock Renormalization group analysis of the small-world network model.
\newblock {\em Physics Letters A}, 263(4):341 -- 346, 1999.

\bibitem{Porter.Mucha.2009}
Mason~A. Porter, Jukka-Pekka Onnela, and Peter~J. Mucha.
\newblock Communities in networks.
\newblock {\em Notices Amer. Math. Soc.}, 56(9):1082--1097, 2009.

\bibitem{Watts.Strogatz.1998}
Duncan~J. Watts and Steven~H. Strogatz.
\newblock Collective dynamics of `small-world'networks.
\newblock {\em Nature}, 393:440 EP --, 06 1998.

\bibitem{Wang.Baur.2012}
Elisabeth Wong, Brittany Baur, Saad Quader, and Chun-Hsi Huang.
\newblock Biological network motif detection: principles and practice.
\newblock {\em Briefings in Bioinformatics}, 13(2):202--215, 2012.

\bibitem{Zhao.2012}
Liqiong Zhao.
\newblock {\em Synchronization on second order networks}.
\newblock PhD thesis, 2012.

\bibitem{Nykamp.Zhao.2011}
Liqiong Zhao, Bryce~II Beverlin, Tay Netoff, and Duane~Quinn Nykamp.
\newblock Synchronization from second order network connectivity statistics.
\newblock {\em Frontiers in computational neuroscience}, 5:28, 2011.

\bibitem{Zieschang}
Paul-Hermann Zieschang.
\newblock {\em Theory of association schemes}.
\newblock Springer Monographs in Mathematics. Springer-Verlag, Berlin, 2005.

\end{thebibliography}
\bibliographystyle{plain}

\section{Appendix}

\begin{proof}[Proof of Proposition \ref{prop:rho}]
We begin by proving that $\rho$ is a homomorphism which is equivalent to demonstrating that
\begin{align*}
\rho(\R^{(i)} \R^{(j)}) = \rho(\R^{(i)}) \rho(\R^{(j)}).
\end{align*}
Expanding the left and right hand sides gives
\begin{align*}
\rho(\R^{(i)} \R^{(j)})_{rs} = \rho( \sum_k \rho(\R^{(i)})_{kj} \R^{(k)})_{rs} = \sum_k \rho(\R^{(i)})_{kj} \rho(\R^{(k)})_{rs}
\end{align*}
and
\begin{align*}
(\rho(\R^{(i)}) \rho(\R^{(j)}))_{rs} = \sum_k \rho(\R^{(i)})_{rk} \rho(\R^{(j)})_{ks}.
\end{align*}
In order to conclude that these two quantities are equal we compute the product $\R^{(i)} \R^{(j)} \R^{(s)}$ in two different ways, namely,
\begin{align*}
(\R^{(i)} \R^{(j)}) \R^{(s)} &= \sum_k \rho(\R^{(i)})_{kj} \R^{(k)} \R^{(s)} = \sum_k \rho(\R^{(i)})_{kj} \sum_r \rho(\R^{(k)})_{rs} \R^{(r)}
\\
&= \sum_r (\sum_k \rho(\R^{(i)})_{kj} \rho(\R^{(k)})_{rs}) \R^{(r)}
\end{align*}
and
\begin{align*}
\R^{(i)} (\R^{(j)} \R^{(s)}) &= \sum_k \rho(\R^{(j)})_{ks} \R^{(i)} \R^{(k)} = \sum_k \rho(\R^{(j)})_{ks} \sum_r \rho(\R^{(i)})_{rk} \R^{(r)}
\\
&= \sum_r (\sum_k \rho(\R^{(i)})_{rk} \rho(\R^{(j)})_{ks}) \R^{(r)}.
\end{align*}
Comparing the coefficients of $\R^{(r)}$ gives the desired result.

Now we turn our attention to eigenvalues. Throughout the rest of the proof we will make use of the simple identity
\begin{align} \label{eq:product}
\M ( \sum_k \beta_k \R^{(k)} ) = \sum_k \bb^\top \rho( \M)_k \R^{(k)}
\end{align}
which follows directly from the definition of $\rho$. It easily follows from \eqref{eq:product} that if $\lambda$ is an eigenvalue of $\rho(\M)$ with left eigenvector $\bb$, then
\begin{align} \label{eq:eigenspace}
\M ( \sum_k \beta_k \R^{(k)}) = \lambda( \sum_k \beta_k \R^{(k)}). 
\end{align}
Since $\R^{(0)}, \R^{(1)}, \ldots,\R^{(d)}$ have disjoint support we know that $\rank( \sum_k \beta_k \R^{(k)}) \ge 1$ so that $\lambda$ is also an eigenvector of $\M$. To show the converse, namely, that if $\lambda$ is an eigenvalue of $\M$, then it is an eigenvalue of $\rho(\M)$, we note that it suffices to only consider the case $\lambda = 0$. This is because $\rho$ is linear, $\I$ is in the algebra generated by $\R^{(0)}, \R^{(1)}, \ldots,\R^{(d)}$, and $\rho(\I)$ is the identity matrix in its algebra. Therefore for the sake of contradiction suppose that zero is an eigenvalue of $\M$ but not of $\rho(\M)$. By \eqref{eq:product} we see that
\begin{align*}
\det(\sum_k \bb^\top \rho( \M)_k \R^{(k)}) = 0
\end{align*}
for any $\bb$. Since zero is not an eigenvalue of $\rho(\M)$ the map $\bb \mapsto \bb^\top \rho(\M)$ is surjective and hence there exists a $\bb$ for which $\sum_k \bb^\top \rho( \M)_k \R^{(k)} = \I$. This of course leads to the contradiction $\det(\I)=0$ completing the argument.

Next we prove that
\begin{align*}
\mult(\lambda) = \rank ((\sum_k \beta_k \R^{(k)})_{\bb \in \mathcal{B}_\lambda}).
\end{align*}
We will start by proving that $\mult(\lambda)$ is at least as large as the right hand side. As mentioned before it suffices to consider $\lambda = 0$. To start we note that there exists a $k \ge 1$ for which $\mathcal{B}_\lambda$ is a basis for the left null space of $\rho(\M)^k = \rho(\M^k)$. By \eqref{eq:eigenspace} this implies that the dimension of the left eigenspace of $\M^k$ is at least $\rank ((\sum_k \beta_k \R^{(k)})_{\bb \in \mathcal{B}_\lambda})$. However, every eigenvector of $\M^k$ is a generalized eigenvector of $\M$. This establishes our inequality. Finally, there exists a vector $\bg$ so that
\begin{align*}
\sum_{\lambda \in \sigma(\rho(\M))} \mult(\lambda) & \ge \sum_{\lambda \in \sigma(\rho(\M))} \rank((\sum_k \beta_k \R^{(k)})_{\bb \in \mathcal{B}_\lambda})
\\
& \ge \sum_{\lambda \in \sigma(\rho(\M))} \rank( \sum_k \sum_{\bb \in \mathcal{B}_\lambda} \beta_k \bg_\bb R^{(i)})
\\
& \ge \rank( \sum_k \sum_{\lambda \in \sigma(\rho(\M))} \sum_{\bb \in \mathcal{B}_\lambda} \beta_k \bg_\bb \R^{(k)})
\\
&= \rank(\I).
\end{align*}
Therefore all inequalities are in fact equalities giving the result.

Now to show that $\rho$ is injective suppose that $\rho(\M) = \0$. Then $\rho(\M)$ has a left eigenbasis for the eigenvalue zero. By repeating the above argument with $k=1$ we see that $\M$ has a right eigenbasis for the eigenvalue zero and hence is zero. This completes the proof.

Finally we consider diagonalizability. If $P$ is a polynomial then clearly $\rho(P(\M))=P(\rho(\M))$. In particular, if $P$ is the minimal polynomial of $\M$ then $P(\rho(\M))=0$. A necessary and sufficient condition for diagonalizability is that the eigenvalues are simple roots of the minimal polynomial. Since $\M$ and $\rho(\M)$ have the same eigenvalues the result follows.  
\end{proof}

\end{document}